\newcommand{\mb}{\mathbb}
\newcommand{\mc}{\mathcal}
\newcommand{\M}{\mc M}
\newcommand{\D}{\mc D}
\def\H{\mc H}
\newcommand{\SC}{\mathcal{S}({\mb R})}
\newcommand{\SSS}{\mathcal{S}^\times({\mb R})}
\newcommand{\LDD}{{\mc L}(\D,\D^\times)}
\newtheorem{defn}{Definition}[section]
\newtheorem{prop}[defn]{Proposition}
\newtheorem{thm}[defn]{Theorem}
\theoremstyle{definition}
\newtheorem{example}[defn]{Example}
\newtheorem{rem}[defn]{Remark}
\def\x{\relax\ifmmode {\mbox{*}}\else*\fi}
\newcommand{\beex}{\begin{example}$\!\!${\bf }$\;$\rm }
\newcommand{\enex}{ \end{example}}
\newcommand{\berem}{\begin{rem}$\!\!${\bf }$\;$\rm }
\newcommand{\enrem}{ \end{rem}}
\newcommand{\bedefi}{\begin{defn}$\!\!${\bf }$\;$\rm }
\newcommand{\findefi}{\end{defn}}
\newcommand{\ip}[2]{\langle {#1}|{#2}\rangle}
\newcommand{\RN}{{\mb R}}
\newcommand{\LD}{{\mathcal L}^\dagger(\D)}
\begin{document}

\title[Some notes about distribution frame multipliers]{Some notes about distribution frame multipliers}%

\author{Rosario Corso}%
\address{Dipartimento di Matematica e Informatica, Universit\`a degli Studi
di Palermo, I-90123 Palermo (Italy)} \email{rosario.corso02@unipa.it}

\author{Francesco Tschinke}%
 \email{francesco.tschinke@unipa.it}

\begin{abstract}
\noindent
Inspired by a recent work about distribution frames, the definition of multiplier operator is extended in the rigged Hilbert spaces setting and a study of its main properties is carried on.  In particular, conditions for the density of domain and boundedness are given.  The case of Riesz distribution bases is examined in order to develop a symbolic calculus.

\smallskip
\noindent{Keywords:}  distributions, rigged Hilbert spaces, frames, multipliers
\end{abstract}
\subjclass[2010]{Primary 47A70; Secondary 42C15, 42C30}
\maketitle
\section{Introduction}

Bessel multipliers were introduced by Balazs in \cite{Balazs_basic_mult} and they became objects of several works \cite{bal_invers,bal_mult,bal_uncond,bal_Riesz,bal_dual_frame2,bal_dual_frame,bal_survey}. Multipliers have been studied also in particular cases (\cite{Ben_Pfander,FeiNow,Groechenig}) and found applications in physics, signal processing, acoustics and mathematics. To define them we need to recall some notions (see \cite{christensen2,heil}).

Let $\H$ be a Hilbert space with inner product $\ip{\cdot}{\cdot}$ and norm $\|\cdot\|$. A sequence $\phi=\{\phi_n\}_{n\in \mb N}$ is a {\it Bessel sequence} of $\H$ with upper bound $B>0$ if
\begin{equation*}
\label{up_frame}
\sum_{n\in \mb N} |\ip{f}{\phi_n}|^2\leq B\|f\|^2, \qquad \forall f\in \H.
\end{equation*}
A sequence $\phi=\{\phi_n\}_{n\in \mb N}$ is a {\it (discrete) frame} if there exist $A,B>0$ such that
$$
A\|f\|^2 \leq \sum_{n\in \mb N} |\ip{f}{\phi_n}|^2\leq B\|f\|^2, \qquad \forall f\in \H.
$$

Let $\varphi=\{\varphi_n\}_{n\in \mb N},\psi=\{\psi_n\}_{n\in \mb N}$ be two sequences of $\H$ and $m:\mb N\to \mb C$. The operator $M_{m,\varphi,\psi}$ defined by
$$
M_{m,\varphi,\psi}f=\sum_{n\in \mb N} m_n\ip{f}{\psi_n}\varphi_n,
$$
and with domain the subspace of $f\in \H$ such that $\sum_{n\in \mb N} m_n\ip{f}{\psi_n}\varphi_n $ is convergent, is called the {\it multiplier} of $\varphi,\psi$ with symbol $m$. When $\varphi,\psi$ are Bessel (frame) sequences and $m$ is a bounded sequence, then $M_{m,\varphi,\psi}$ is defined on $\H$, bounded, and it is called a {\it Bessel (frame) multiplier}.

Independently in \cite{AAG_paper} and in \cite{kaiser},  the notion of continuous frame was introduced as generalization of discrete frame and later in \cite{balaszetal} the correspondent notion of Bessel continuous (frame) multiplier was formulated, which we now recall. The setting involves a measure space $(X,\mu)$ with  positive measure $\mu$. A map $F:x\in X\to F_x\in \H$ is called a {\it continuous frame} with respect to $(X,\mu)$ if
\begin{enumerate}
	\item[(i)] $F$ is weakly measurable, i.e. $f\mapsto\ip{f}{F_x}$ is $\mu$-measurable for every $f\in \H$;
	\item[(ii)] there exist $A,B>0$ such that
	\begin{equation}
	\label{cont_frame}
	A\|f\|^2\leq \int_X |\ip{f}{F_x}|^2d\mu\leq B\|f\|^2, \qquad \forall f\in \H.
	\end{equation}
\end{enumerate}
A weakly measurable map $F:x\in X\to F_x\in \H$ is called a {\it Bessel continuous map} with respect to $(X,\mu)$ if the second inequality in \eqref{cont_frame} holds. 
Let $F,G:X\to \H$ be Bessel continuous maps and $m\in L^\infty(X,\mu)$. An operator $M_{m,F,G}$ can be weakly defined by
$$
\ip{M_{m,F,G}f}{g}=\int_X m(x)\ip{f}{F_x}\ip{G_x}{g}d\mu, \qquad f,g\in \H.
$$
This operator is called the {\it Bessel continuous multiplier} of $F,G$ with symbol $m$  (and {\it continuous frame multiplier} if $F,G$ are in addition continuous frames). Among continuous frame multipliers one can find time-frequency localization operators \cite{daub1,daub2} (also called short-time Fourier transform multipliers) and Calder\'{o}n-Toeplitz operators \cite{roch,roch2}. 

Recently, a notion of frame (and related topics such as bases,  Bessel maps, Riesz bases and Riesz-Fischer maps) in space of distributions is appeared in \cite{TTT,tschinke}, involving a {\it rigged Hilbert space}, or {\it Gel'fand triplet}, i.e.\ a triple  $\D[t]\subset \H \subset \D^\times[t^\times]$, where  $\D[t]$ is a dense subspace of $\H$ endowed with a locally convex topology $t$, stronger than the one induced by the Hilbert norm and $\D^\times[t^\times]$ is the  conjugate dual of $\D[t]$ with the strong dual topology $t^\times$. If $\D[t]$ is reflexive, then the inclusions are dense and continuous. Analogous concepts in rigged Hilbert spaces and for the discrete case have been considered also in \cite{gb_ct_riesz}. The aim of this paper is then to give a correspondent notion of multipliers of distribution maps. A preliminary, but very confined, study about distribution multipliers actually was given in \cite{TTT}.

In contrast with a large part of the current literature, we will not pay attention to bounded multipliers only. To give an example of the importance of unbounded multipliers, we mention \cite{bit_jmp,Bag_sesq} where they were used as tools to define non-selfadjiont hamiltonians. Sufficient conditions for operators to be written as multipliers with a fixed sequence have been given in \cite{bellcorso,gavruta}.

The paper is organized as follows. We start by recalling some preliminaries in Section \ref{sect_2}. Then, in Section \ref{sect_3}, we give the definitions of distribution multipliers. They can actually be formulated in two different ways, i.e.\ as operators from $\D$ to $\D^\times$ or as operators on $\H$. Questions about density of domain and closedness of unbounded multipliers are discussed in Section \ref{sect_4}, while Riesz distribution multipliers are studied and some results about symbolic calculus is obtained in Section \ref{sect_5}. 

\section{Preliminary definitions and facts}
\label{sect_2}

Throughout the paper $\H$ indicates a Hilbert space with inner product $\ip{\cdot}{\cdot}$ and norm $\|\cdot\|$. We denote by $D(T)$ and $R(T)$ the domain and the range of an operator $T:D(T)\subset \H \to \H$. If $T$ is densely defined, then we write $T^*$ for its adjoint.

A sequence $\phi=\{\phi_n\}_{n\in \mb N}\subset \H$ is called {\it total} if $\ip{f}{\phi_n}=0$ for every $n\in \mb N $ implies that $f=0$. In particular, discrete frames are total sequences. A {\it Riesz basis} $\phi$  is a total sequence satisfying for some $A,B>0$
$$
A \sum_{n\in \mb N} |c_n|^2 \leq \left \| \sum_{n\in \mb N} c_n \phi_n \right \|^2 \leq B\sum_{n\in \mb N} |c_n|^2, \qquad\forall \{c_n\}\in \ell_2(\mb N),
$$
where $\ell_2(\mb N)$ is the usual space of square integrable complex sequences.

  Let $\D[t]$ be  a dense subspace of $\H$ endowed with a locally convex topology $t$, stronger than the topology induced by the Hilbert norm.
The vector space of all continuous conjugate
linear functionals on $\D[t]$ (the conjugate dual of $\D[t]$)
is denoted by $\D^\times[t^\times]$, and is endowed with the {\em strong dual topology} $t^\times$, defined by the seminorms
\begin{equation*}\label{semin_Dtimes}
q_\M(F)=\sup_{g\in \M}|\ip{F}{g}|, \quad F\in \D^\times,
\end{equation*}
where $\M$ is a bounded subsets of $\D[t]$. With a well-known identification procedure (see \cite{horvath}),  $\H$  is considered as subspace of $\D^\times[t^\times]$. The triplet
$$
\D[t] \subset  \H \subset\D^\times[t^\times],
$$
is called {\em rigged Hilbert space} or {\em Gel'fand triplet} \cite{gelf3,gelf}. If $\D[t]$ is reflexive $\H$ is continuously and densely embedded in $\D^\times[t^\times]$. Denoting by $\hookrightarrow$ the continuous and dense embedding,  the triple
is also denoted by
\begin{equation*}\label{eq_one_intr}
\D[t] \hookrightarrow  \H \hookrightarrow\D^\times[t^\times].
\end{equation*}
In this way, the sesquilinear form $B( \cdot , \cdot )$ which puts $\D$
and $\D^\times$ in duality extends the inner product of
$\H$;
 i.e. $B(\xi, \eta) = \ip{\xi}{\eta}$, for every $\xi, \eta \in \D$: we adopt the symbol $\ip{\cdot}{\cdot}$ for both of
 them. 
 
Let us denote by $\LDD$ the vector space of all continuous linear maps from $\D[t]$ into  $\D^\times[t^\times]$ (\cite{ait_book}). If $\D[t]$ is reflexive, it is possible introduce an involution $X \mapsto X^\dag$  in $\LDD$  by the identity:
$$ \ip{X^\dag \eta}{ \xi} = \overline{\ip{X\xi}{\eta}}, \quad \forall \xi, \eta \in \D.$$  Hence, in this case, $\LDD$ is a $^\dagger$-invariant vector space.

If $\D[t]$ is a {smooth} space (e.g., Fr\'echet and reflexive), then $\LDD{}$ is a quasi *-algebra over $\LD$ (Definition 2.1.9 of \cite{ait_book}).

We also denote by ${\mc L}(\D)$ the algebra of all continuous linear operators $Y:\D[t]\to \D[t]$ and by ${\mc L}(\D^\times)$ the algebra of all continuous linear operators $Z:\D^\times[t^\times]\to \D^\times[t^\times]$.
If $\D[t]$ is reflexive, for every $Y \in {\mc L}(\D)$ there exists a unique operator $Y^\times\in {\mc L}(\D^\times)$, the {\it adjoint} of $Y$, such that
$$ \ip{F}{Yg} = \ip{Y^\times F}{g}, \quad\forall F \in \D^\times, g \in \D.$$ In similar way an operator $Z\in {\mc L}(\D^\times)$ has an adjoint $Z^\times\in {\mc L}(\D)$ such that $(Z^\times)^\times=Z$. We denote by $\LD$ the algebra of all closable operators $A$ in $\H$ such that $D(A)=\D$, $D(A^*)\supseteq\D$, and $A$,  $A^*$ leave $\D$ invariant. With the involution $A\mapsto A^*\upharpoonright_{\D}=A^\dag$, $\LD$ is a *-algebra.

In this paper  $(X,\mu)$ denotes a measure space with a $\sigma$-finite positive measure $\mu$. We recall that a measurable set $A\subseteq X$ is called an {\it atom} if $\mu(A)>0$ and for every $B\subseteq A$ we have either $\mu(B)=0$ or $\mu(B)=\mu(A)$. 
A measure space $(X,\mu)$ is called {\it atomic} if there
exists a partition $\{A_n\}_{n\in \mb N}$ of $X$ consisting of atoms and sets of measure zero. We write $L^1(X,\mu),L^2(X,\mu)$ and $L^\infty(X,\mu)$ for the usual spaces of (classes of) measurable functions. Moreover, $\|m\|_\infty$ denotes the essential supremum of $m\in L^\infty(X,\mu)$.  For simplicity, we write $L^1(\mb R)$, $L^2(\mb R)$ and $L^\infty(\mb R)$ when we assume the Lebesgue measure. The Fourier transform of $f\in L^1(\mb R)$ is defined as 
	$\widehat f(\gamma)=\int_{\mb R}  f(x)e^{-2\pi i \gamma x}dx$ and it extends to a unitary operator of $L^2(\mb R)$ in a standard way.
In this paper we consider weakly measurable maps:  given a measure space  $(X,\mu)$ with $\mu$ a $\sigma$-finite positive measure,
  $\omega: x\in X\to \omega_x \in \D^\times$ is a {\it weakly measurable map} if, for every $f \in \D$, the complex valued function $x \mapsto \ip{f}{\omega_x}$ is $\mu$-measurable. 
  If not otherwise specified, throughout the paper we will work with a fixed rigged Hilbert space $\D[t] \subset  \H \subset\D^\times[t^\times]$ with $\D[t]$ reflexive and a measure space $(X,\mu)$ as described before. 
  We start by recalling simple definitions about weakly measurable maps. Since the form which puts $\D$ and $\D^\times$ in conjugate duality is an extension of the inner product of $\H$, we write $\ip{f}{\omega_x}$ for $\overline{\ip{\omega_x}{f}}$,  $f \in \D$.

\begin{defn}[{\cite[Definition 2.2]{TTT}}]
\label{tandg}
Let  $\omega: x\in X\to \omega_x \in \D^\times$ be a weakly measurable map, then:
\begin{itemize}
\item[(i)] $\omega$ is \textit{total} if, $f \in \D $ and $\ip{f}{\omega_x}=0$  $\mu$-a.e. $x \in X$ implies $f=0$;
\item[(ii)]$\omega$ is \textit{$\mu$-independent} if the unique $\mu$-measurable function $\xi:X\to \mb C$ such that: \\$\int_X \xi(x)\ip{g}{\omega_x} d\mu=0$, for every $g \in \D$, is $\xi(x)=0$ $\mu$-a.e.
\end{itemize}
\end{defn}

\begin{defn}[{\cite[Definition 3.2]{TTT}}] A weakly measurable map $\omega$ is a {\em Bessel distribution map} (briefly: Bessel map) if for every $f \in \D$,
$ \int_X |\ip{f}{\omega_x}|^2d\mu<\infty$.
\end{defn}
It is convenient to consider $\D[t]$ as a Fr\'{e}chet space because of the following proposition.

\begin{prop}[{\cite[Proposition 3.1]{TTT}}]\label{prop2} Let $\D[t]$ be a Fr\'{e}chet space and $\omega: x\in X \to \omega_x\in \D^\times$ a weakly measurable map. The following statements are equivalent.
\begin{itemize}
\item[(i)]
$\omega$ is a  Bessel  map.
\item[(ii)] There exists a continuous seminorm $p$ on $\D[t]$ such that
\begin{equation*}\label{eqn_bessel1}\left( \int_X |\ip{f}{\omega_x}|^2d\mu\right)^{1/2}\leq p(f), \quad \forall f \in \D.\end{equation*}
 \item[(iii)] For every bounded subset $\mathcal M$ of $\D$ there exists $C_{\mathcal M}>0$ such that
\begin{equation*}
\label{disbessel}
\sup_{f\in\mathcal M}{\Bigr |}\int_X\xi(x)\ip{\omega_x}{f}d\mu{\Bigl |}\leq C_{\mathcal M}\|\xi\|_2, \quad \forall \xi\in L^2(X,\mu).
\end{equation*}
  \end{itemize}
\end{prop}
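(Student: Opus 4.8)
The plan is to prove the cyclic chain of implications (i) $\Rightarrow$ (ii) $\Rightarrow$ (iii) $\Rightarrow$ (i). The heart of the matter is (i) $\Rightarrow$ (ii), which is precisely where the hypothesis that $\D[t]$ be Fr\'echet enters, since it permits an application of the closed graph theorem. The other two implications are softer: (ii) $\Rightarrow$ (iii) rests on the Cauchy--Schwarz inequality together with the fact that continuous seminorms are bounded on bounded sets, and (iii) $\Rightarrow$ (i) on a truncation argument combined with monotone convergence.

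For (i) $\Rightarrow$ (ii) I would consider the linear map $T\colon \D[t]\to L^2(X,\mu)$ sending $f$ to the class of the function $x\mapsto \ip{f}{\omega_x}$. Statement (i) says exactly that $T$ is well defined, i.e.\ that its image lies in $L^2(X,\mu)$. Since $\D[t]$ is Fr\'echet and $L^2(X,\mu)$ is Banach, by the closed graph theorem it suffices to show that the graph of $T$ is closed. So suppose $f_n\to f$ in $\D[t]$ and $Tf_n\to h$ in $L^2(X,\mu)$. On one hand, because each $\omega_x$ is a continuous functional on $\D[t]$, the convergence $f_n\to f$ gives $\ip{f_n}{\omega_x}\to \ip{f}{\omega_x}$ for every fixed $x$, hence $Tf_n\to Tf$ pointwise. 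On the other hand, $L^2$-convergence yields a subsequence with $Tf_{n_k}\to h$ $\mu$-a.e. Comparing the two limits gives $h=Tf$ $\mu$-a.e., so the graph is closed and $T$ is continuous. Continuity of a linear map from a Fr\'echet space into a normed space means there is a continuous seminorm $p$ on $\D[t]$ with $\|Tf\|_2\le p(f)$, which is (ii).

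For (ii) $\Rightarrow$ (iii), fix a bounded set $\mathcal M\subset\D$. For $f\in\mathcal M$ and $\xi\in L^2(X,\mu)$, Cauchy--Schwarz together with $|\ip{\omega_x}{f}|=|\ip{f}{\omega_x}|$ gives $\left|\int_X \xi(x)\ip{\omega_x}{f}\,d\mu\right|\le \|\xi\|_2\left(\int_X|\ip{f}{\omega_x}|^2 d\mu\right)^{1/2}\le \|\xi\|_2\,p(f)$, and since $p$ is continuous it is bounded on $\mathcal M$, so one may take $C_{\mathcal M}=\sup_{f\in\mathcal M}p(f)$. For (iii) $\Rightarrow$ (i), fix $f\in\D$ and set $g_f(x)=\ip{\omega_x}{f}$; applying (iii) to the bounded set $\{f\}$, I would test the inequality against the truncations $\xi_n=\overline{g_f}\,\mathbf 1_{X_n}$, where $X_n=\{x:|g_f(x)|\le n\}$ intersected with an increasing exhaustion of $X$ by sets of finite measure (available by $\sigma$-finiteness), so that $\xi_n\in L^2(X,\mu)$. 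Then $\int_X \xi_n g_f\,d\mu=\int_{X_n}|g_f|^2 d\mu=\|\xi_n\|_2^2$, and (iii) forces $\|\xi_n\|_2^2\le C_{\{f\}}\|\xi_n\|_2$, whence $\int_{X_n}|g_f|^2 d\mu\le C_{\{f\}}^2$ uniformly in $n$; letting $n\to\infty$ and using monotone convergence yields $\int_X|\ip{f}{\omega_x}|^2 d\mu\le C_{\{f\}}^2<\infty$, i.e.\ $\omega$ is a Bessel map.

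The main obstacle is the closed graph step in (i) $\Rightarrow$ (ii): one must verify that the graph really is closed, which requires reconciling the pointwise limit (coming from the continuity of the functionals $\omega_x$) with the a.e.-convergent subsequence extracted from $L^2$ convergence, and it is exactly here that the completeness and metrizability of $\D[t]$ are indispensable. Once the continuity of $T$ is secured, the remaining implications are routine.
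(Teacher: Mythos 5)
Your proof is correct; note that this paper does not prove Proposition \ref{prop2} at all, but imports it from \cite{TTT}, and your argument is the standard one used there. The chain you give --- closed graph theorem for the linear map $f\mapsto \ip{f}{\omega_x}$ from the Fr\'echet space $\D[t]$ into $L^2(X,\mu)$ for (i)$\Rightarrow$(ii), Cauchy--Schwarz plus boundedness of continuous seminorms on bounded sets for (ii)$\Rightarrow$(iii), and truncation over a $\sigma$-finite exhaustion with monotone convergence for (iii)$\Rightarrow$(i) --- is sound at every step, including the reconciliation of the pointwise limit with the a.e.\ limit of an $L^2$-convergent subsequence in the closed-graph verification.
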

As a consequence of the previous proposition, we have \cite{TTT}:
\begin{itemize}
\item
the  conjugate linear functional on $\D$:
$$ {\Lambda^\xi_\omega}:=\int_X\xi(x){\omega_x}d\mu$$
is defined in weak sense, and is continuous, i.e. $\Lambda_\omega^\xi\in \D^\times[t^\times]$;
\item
the {\em synthesis operator}  $T_\omega:L^2(X, \mu)\to \D^\times[t^\times]$  defined by
$ T_\omega: \xi \mapsto {\Lambda^\xi_\omega}$
 is continuous;
\item
the {\em analysis operator} $T_\omega^\times: \D[t]\to L^2(X, \mu)$ defined by
$(T_\omega^\times f)(x) =\ip{f}{\omega_x}$
is continuous;
\item
the {\it frame operator} $S_\omega:\D[t]\rightarrow\D^\times[t^\times]$, $S_\omega:=T_\omega T_\omega^\times$ is continuous, i.e. $S_\omega\in\LDD$.
\end{itemize}

 We will often work with a special class of Bessel maps which is defined as follows.
\begin{defn}[{\cite[Definition 3.2]{TTT}}]
\label{bbounded}
A Bessel distribution map $\omega$ is called {\it bounded Bessel map} if there exists $B>0$ such that
$$
\int_X|\ip{\omega_x}{f}|^2d\mu\leq B\|f\|^2, \quad\forall f\in\D.
$$
\end{defn}
If $\omega$ is a bounded Bessel map, with a limit procedure, we have \cite{TTT}:
\begin{itemize}
\item
$\Lambda_\omega^\xi$ is bounded in $\D(\|\cdot\|)$, then it has a bounded extension ${\tilde\Lambda}_\omega^\xi$ to $\H$;
\item
the synthesis operator $T_\omega$ has range in $\H$, it is bounded and $\|T_\omega\|\leq \sqrt{B}$;
\item
the (Hilbert adjoint) operator  $T_\omega^*:\H\rightarrow L^2(X,\mu)$ extends $T_\omega^\times$ and $T_\omega^*f=\eta$, where $\eta$ is the limit in $L^2(X,\mu)$ of the functions $T_\omega^\times f_n: x\mapsto \ip{f_n}{\omega_x}$, where $\{f_n\}$ is a sequence in $\D$ converging to $f$ (we will also denote the function $T_\omega^*f$ by $x\mapsto \ip{f}{\check\omega_x}$ for $f\in \H$, i.e. we consider $\check \omega_x$ as an `extension' of the linear functional $\omega_x$);
\item
the operator $\widehat{S}_\omega=T_\omega T_\omega^*$ is bounded and it is an extension of $S_\omega$.
\end{itemize}

 Now it is time to recall a notion of frames in the distribution context  ({Definition 3.6 of \cite{TTT}}).

\begin{defn}[{\cite[Definition 3.6]{TTT}}] \label{defn_distribframe}  Let $\D[t]\subset\H\subset \D^\times[t^\times]$ be a rigged Hilbert space, with $\D[t]$ a reflexive space and $\omega$ a weakly measurable map.
We say that $\omega$ is a {\em  distribution frame} if there exist $A,B>0$ (called {\it frame bounds}) such that
\begin{equation*} \label{eqn_frame_main1} A\|f\|^2 \leq \int_X|\ip{f}{\omega_x}|^2d\mu \leq B \|f\|^2, \quad \forall f\in \D. \end{equation*}
Moreover, we say that
\begin{itemize}
	\item[a)]
	$\omega$ is a {\it tight} distribution frame if we can choose $A = B$ as frame bounds of $\omega$;
	\item[b)]
	$\omega$ is a {\it Parseval} distribution frame if $A = B = 1$ are frame bounds of $\omega$.
\end{itemize}
\end{defn}
If $\omega$ is a distribution frame, then the frame operator $\hat{S}_\omega$  satisfies  the inequalities
$$ A\|f\| \leq \| \hat{S}_\omega f\| \leq B\|f\|,\quad \forall f\in \H. $$
Since  $\hat{S}_\omega$ is symmetric, this  implies that $\hat{S}_\omega$ has a bounded inverse $\hat{S}_\omega^{-1}$ everywhere defined in $\H$.

A Parseval distribution frame satisfies Definition \ref{defn_distribframe} with $S_\omega=I_\D$, the identity operator of $\D$, and $\widehat{S}_\omega=I_{\H}$, the identity operator of $\H$.

\begin{example}[{\cite[Example 3.18]{TTT}}] \label{ex_delta}Let us consider the rigged Hilbert space
	$$
	\SC\subset L^2(\mb R)\subset\SSS
	$$
	where $\SC$ is the Schwartz space of rapidly decreasing $C^\infty$-functions on $\mb R$ and the conjugate dual $\SSS$ is the space of tempered distributions.  
	Let $\delta$ be the weakly measurable map $\delta: x\in {\mb R}\to \delta_x\in \SSS$, where $\delta_x$ stands for the $\delta$ distribution centered at $x$. As known, $\delta_x$ acts in the following way
	$\ip{\delta_x}{\phi}=\overline{\phi(x)} $, for every $\phi\in \SC$.
	Then one trivially has
	$$\int_\RN |\ip{\phi}{\delta_x}|^2dx= \int_\RN|\phi(x)|^2 dx =\|{\phi}\|^2,\quad \forall \phi\in \SC,$$
	where $\|{\phi}\|^2$ is the norm in $L^2(\mb R)$; hence, $\delta$ is a Parseval frame.
\end{example}

We note that this example is based on the measure space $(X,\mu)=(\RN,\lambda)$, where $\lambda$ is the Lebesgue measure on $\mb R$, and  of course it is not a continuous frame. On the contrary, the definition of distribution frame reduces to that of discrete frame when $(X,\mu)$ is the set $\mathbb{N}$ with the counting measure $\gamma$.  
Indeed we have the following.

\begin{prop}
	\label{prop_distr_discr}
	Let $\omega: n\in \mb N\to \omega_n \in \mathcal D^\times$ be a bounded Bessel distribution (resp., distribution frame) on $\D [t]\subset \H \subset \D^\times[t^\times]$. Then $\{\omega_n\}_{n\in \mb N}\subset \H$ and $\{\omega_n\}_{n\in \mb N}$ is a Bessel sequence (resp., frame) of $\H$.
\end{prop}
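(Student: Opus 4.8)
The plan is to specialise everything to the measure space $(X,\mu)=(\mb N,\gamma)$ with $\gamma$ the counting measure, so that the integrals in Definitions \ref{bbounded} and \ref{defn_distribframe} become series. Under this identification the bounded Bessel hypothesis reads $\sum_{n\in\mb N}|\ip{\omega_n}{f}|^2\leq B\|f\|^2$ for all $f\in\D$, and in the frame case one has in addition the lower bound $A\|f\|^2\leq\sum_{n\in\mb N}|\ip{\omega_n}{f}|^2$. Since a distribution frame has as its upper estimate exactly the bounded Bessel inequality, it suffices to treat both claims in this common framework.

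First I would show that each $\omega_m$ lies in $\H$. Fixing $m\in\mb N$ and keeping only the $m$-th term of the series gives the pointwise estimate $|\ip{\omega_m}{f}|\leq\sqrt B\,\|f\|$ for every $f\in\D$. Hence the (conjugate) linear functional $f\mapsto\ip{\omega_m}{f}$ is continuous for the Hilbert norm on the dense subspace $\D$, and therefore extends uniquely to a bounded functional on all of $\H$. By the Riesz representation theorem this functional is implemented by a vector of $\H$, which under the canonical embedding $\H\hookrightarrow\D^\times$ coincides with $\omega_m$ (the two agree on the dense set $\D$, hence as elements of $\D^\times$). This proves $\{\omega_n\}_{n\in\mb N}\subset\H$ and, for $f\in\D$, identifies the duality pairing $\ip{\omega_n}{f}$ with the genuine inner product of $\H$.

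It then remains to pass the inequalities from $\D$ to all of $\H$ by density. For the upper bound, given $f\in\H$ choose $f_k\in\D$ with $f_k\to f$ in $\H$; for each finite $N$ the inequality $\sum_{n=1}^{N}|\ip{\omega_n}{f_k}|^2\leq B\|f_k\|^2$ passes to the limit in $k$ (each inner product is now continuous on $\H$), and then letting $N\to\infty$ yields $\sum_{n\in\mb N}|\ip{\omega_n}{f}|^2\leq B\|f\|^2$ for all $f\in\H$, i.e.\ $\{\omega_n\}$ is a Bessel sequence. In the frame case, the Bessel property just established shows that $f\mapsto\sum_{n\in\mb N}|\ip{\omega_n}{f}|^2$ is the squared norm of a bounded operator $\H\to\ell_2(\mb N)$, hence a continuous function of $f$; since the lower estimate $A\|f\|^2\leq\sum_{n\in\mb N}|\ip{\omega_n}{f}|^2$ holds on the dense subspace $\D$ and both sides are continuous on $\H$, it extends to every $f\in\H$. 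Thus $\{\omega_n\}$ is a frame.

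I expect the only genuinely delicate point to be the first step: verifying that the vector produced by Riesz representation really is the distribution $\omega_m$ and not merely an $\H$-vector agreeing with it on $\D$. Because $\D$ is dense in $\H$ and the embedding is the canonical one extending the inner product, this identification is forced. Passing the inequalities to $\H$ is then routine, the only care needed being to treat the infinite lower-bound sum via the continuity of $f\mapsto\sum_{n\in\mb N}|\ip{\omega_n}{f}|^2$ rather than a naive term-by-term limit.
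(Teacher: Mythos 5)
Your proof is correct and follows essentially the same route as the paper: isolate a single term of the Bessel sum to see that each functional $f\mapsto\ip{f}{\omega_m}$ is $\|\cdot\|$-bounded on $\D$, hence $\omega_m\in\H$, and then extend the two inequalities from $\D$ to $\H$ by density. The only difference is that you write out explicitly the limiting argument (finite sums first, then $N\to\infty$, and continuity of $f\mapsto\sum_n|\ip{f}{\omega_n}|^2$ for the lower bound) which the paper delegates to the standard reference \cite[Lemma 5.1.9]{christensen2}.
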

\begin{proof}
	Assume that $\{\omega_n\}_{n\in \mb N}$ is a bounded Bessel distribution map and fix $m\in \mb N$. The linear functional $f\mapsto \ip{f}{\omega_m}$ for $f\in \D$ is bounded with respect to the norm of $\H$, because
	\begin{equation}
	\label{eq_1}
	|\ip{f}{\omega_m}|^2\leq \sum_{n\in\mb N} |\ip{f}{\omega_n}|^2\leq B \|f\|^2, \qquad \forall f\in \D.
	\end{equation}
	This means that $\omega_m\in \H$. A standard argument ({\cite[Lemma 5.1.9]{christensen2}}) shows that \eqref{eq_1} extends for each $f\in \H$, i.e. $\{\omega_n\}_{n\in \mb N}$ is a Bessel sequence. \\
	If $\{\omega_n\}_{n\in \mb N}$ is a distribution frame, then the conclusion follows in a similar way.
\end{proof}

\begin{defn}
	Let $\omega,\theta$ be distribution frames. We say that $\theta$ is a dual frame of $\omega$ if
	$$ \ip{f}{g}= \int_X\ip{f}{\theta_x}\ip{\omega_x}{g}d\mu, \quad \forall f,g \in \D.$$
\end{defn}

To formulate the following result, we recall that there exists a unique operator $R_\omega\in {\mc L}(\D)$ such that $S_\omega R_\omega f=f$ for every $f\in \D$ (\cite[Lemma 3.8]{TTT}).

\begin{prop}[{\cite[Proposition 3.10]{TTT}}] \label{prop_dual2} Let $\omega$ be a distribution frame with frame bounds 	$A$ and $B$.  Then the map $\theta:X\to \D^\times$ defined by $\theta_x:=R_\omega^\times\omega_x$ for $x\in X$, is a distribution frame with bounds $B^{-1}$ and $ A^{-1}$ and it is a dual frame of $\omega$.
\end{prop}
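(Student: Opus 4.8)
The plan is to reduce everything to the sesquilinear form of the frame operator together with the identification of $R_\omega$ with the inverse of the extended frame operator $\widehat S_\omega$ on $\D$.

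First I would record the identity linking $\theta$ and $\omega$. Since $R_\omega\in\mc L(\D)$ and $R_\omega^\times$ is its adjoint in $\mc L(\D^\times)$, the defining relation of the adjoint gives, for every $f\in\D$ and $x\in X$,
\[
\ip{\theta_x}{f}=\ip{R_\omega^\times\omega_x}{f}=\ip{\omega_x}{R_\omega f},
\]
so that $\ip{f}{\theta_x}=\ip{R_\omega f}{\omega_x}$. In particular $x\mapsto\ip{f}{\theta_x}$ is measurable, being $x\mapsto\ip{R_\omega f}{\omega_x}$ with $R_\omega f\in\D$, hence $\theta$ is weakly measurable. I would also recall the sesquilinear form of the frame operator: unwinding $S_\omega=T_\omega T_\omega^\times$ yields
\[
\ip{S_\omega h}{g}=\int_X\ip{h}{\omega_x}\ip{\omega_x}{g}\,d\mu,\qquad h,g\in\D.
\]

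Next I would identify $R_\omega$ with $\widehat S_\omega^{-1}$ on $\D$. As $\widehat S_\omega$ extends $S_\omega$ and $S_\omega R_\omega f=f$ for $f\in\D$, we get $\widehat S_\omega R_\omega f=f$ with $R_\omega f\in\D\subset\H$; since $\widehat S_\omega$ is boundedly invertible on $\H$, this forces $R_\omega f=\widehat S_\omega^{-1}f$. Because $\widehat S_\omega$ is bounded, everywhere defined and symmetric, it is self-adjoint, and the inequalities $A\|f\|^2\le\ip{\widehat S_\omega f}{f}\le B\|f\|^2$ (valid on $\D$ from the frame bounds and extended to $\H$ by density and continuity, the lower bound also giving positivity) amount to $A\,\id\le\widehat S_\omega\le B\,\id$, whence $B^{-1}\id\le\widehat S_\omega^{-1}\le A^{-1}\id$. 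Using the sesquilinear identity with $h=g=R_\omega f$,
\[
\int_X|\ip{f}{\theta_x}|^2\,d\mu=\int_X\ip{R_\omega f}{\omega_x}\ip{\omega_x}{R_\omega f}\,d\mu=\ip{S_\omega R_\omega f}{R_\omega f}=\ip{f}{R_\omega f}=\ip{f}{\widehat S_\omega^{-1}f},
\]
and the order inequalities give $B^{-1}\|f\|^2\le\int_X|\ip{f}{\theta_x}|^2\,d\mu\le A^{-1}\|f\|^2$, so $\theta$ is a distribution frame with bounds $B^{-1}$ and $A^{-1}$.

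Finally, the dual frame relation follows from the same computation without specializing $g$: for $f,g\in\D$,
\[
\int_X\ip{f}{\theta_x}\ip{\omega_x}{g}\,d\mu=\int_X\ip{R_\omega f}{\omega_x}\ip{\omega_x}{g}\,d\mu=\ip{S_\omega R_\omega f}{g}=\ip{f}{g}.
\]
I expect no genuine obstacle here: the only delicate points are bookkeeping, namely justifying the sesquilinear identity for $S_\omega$ from the definitions of $T_\omega$ and $T_\omega^\times$, and the passage from the norm estimates $A\|f\|\le\|\widehat S_\omega f\|\le B\|f\|$ to the operator inequalities $A\,\id\le\widehat S_\omega\le B\,\id$ (via symmetry and positivity), which is what delivers the bounds $B^{-1},A^{-1}$ for $\widehat S_\omega^{-1}$ and hence for $\theta$.
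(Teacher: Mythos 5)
Your proof is correct, and it is essentially the argument of the cited source: note that this paper states Proposition \ref{prop_dual2} without proof, importing it from \cite[Proposition 3.10]{TTT}, and the route you take --- identifying $R_\omega$ with $\widehat{S}_\omega^{-1}\upharpoonright_\D$, reducing the frame bounds for $\theta$ to the quadratic form via $\int_X|\ip{f}{\theta_x}|^2d\mu=\ip{f}{R_\omega f}=\ip{f}{\widehat{S}_\omega^{-1}f}$, and passing from $A\,\id\le\widehat{S}_\omega\le B\,\id$ to $B^{-1}\id\le\widehat{S}_\omega^{-1}\le A^{-1}\id$ --- is the one that yields the sharp bounds $B^{-1},A^{-1}$ (a cruder estimate through $\|R_\omega f\|$ would only give $AB^{-2}$ and $BA^{-2}$), so no other approach would do. All steps check out against the facts recorded in Section \ref{sect_2}, including the duality identity $\ip{R_\omega^\times\omega_x}{f}=\ip{\omega_x}{R_\omega f}$ and the weak measurability of $\theta$.
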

The map $\theta$ in Proposition \ref{prop_dual2} is called the {\em canonical dual frame} of $\omega$.

\begin{defn}[{\cite[Definition 2.3]{TTT}}]
	\label{def_dist_basis}
 Let $\D[t]$ be a locally convex space, $\D^\times$ its conjugate dual and $\omega: x\in X\to \omega_x \in \D^\times$ a weakly measurable map. Then
$\omega$ is a {\it distribution basis} for $\D$ if, for every $f\in\D$, there exists a {\it unique} $\mu$-measurable function $\xi_f$ such that:
$$
\ip{f}{g}=\int_X\xi_f(x)\ip{\omega_x}{g}d\mu, \quad\forall f,g\in\D
$$
and, for every $x\in X$, the linear functional $f\in\D\rightarrow\xi_f(x)\in\mathbb C$ is continuous in $\D[t]$.
\end{defn}
\noindent
Given a distribution basis $\omega$ we can simply write in weak sense
$$
f=\int_X\xi_f(x)\omega_x d\mu, \quad \forall f\in \D.
$$
Moreover, $\omega$ is $\mu$-independent. Since $f\in\D\rightarrow\xi_f(x)$ continuously, there exists a unique weakly $\mu$-measurable map $\theta:X\rightarrow \D^\times$ such that: $\xi_f(x)=\ip{f}{\theta_x}$ for every $f\in\D$. We call $\theta$ {\it dual} map of $\omega$. If $\theta$ is $\mu$-independent, then it is a distribution basis too.

The next two notions are the counterparts of orthonormal and Riesz bases of the discrete context, which are particular cases of Definition \ref{def_dist_basis}.	

\begin{defn}[{\cite{TTT}}] 
A weakly measurable map $\omega: X \to \D^\times$  is a {\em Riesz distribution basis} if it is a $\mu$-independent distribution frame.\\
A weakly measurable map $\zeta:X\to \D^\times$ is {\em Gel'fand distribution basis} if it is a $\mu$-independent Parseval distribution frame.
\end{defn}


In a way similar to the discrete case, we can give some equivalent conditions for a Bessel distribution map to be a Riesz distribution basis.

\begin{prop}[{\cite[Proposition 3.19]{TTT}}]\label{prop_rieszbasis}Let  $\D\subset\H\subset \D^\times$ be a rigged Hilbert space and let $\omega: x\in X \to \omega_x\in \D^\times$ be a Bessel distribution map. Then the following statements are equivalent.
\begin{itemize}
\item[(a)]  $\omega$ is a Riesz distribution basis;
\item[(b)] if $\zeta$ is a Gel'fand distribution basis, then the operator $W$ defined, for $f\in \H$, by
$$ f=\int_X \xi_f(x)\zeta_x d\mu \to W f= \int_X \xi_f(x)\omega_x d\mu$$ is continuous and has bounded inverse;
\item[(c)] the synthesis operator $T_\omega$ is a topological isomorphism of $L^2(X, \mu)$ onto $\H$;
\item[(d)] { $\omega$ is total and there exist $A,B>0$ such that
\begin{equation*}\label{eqn_ref} A\|\xi\|_2^2 \leq \left\|\int_X \xi(x)\omega_x d\mu \right\|^2 \leq B\|\xi\|_2^2, \quad \forall \xi \in L^2(X, \mu).\end{equation*}}
\end{itemize}
\end{prop}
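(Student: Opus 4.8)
The plan is to reduce everything to properties of the synthesis operator $T_\omega$ and its transpose. First I would record the duality identity
$$
\ip{T_\omega\xi}{f}=\langle\xi,T_\omega^\times f\rangle_{L^2(X,\mu)},\qquad \xi\in L^2(X,\mu),\ f\in\D,
$$
which is immediate from the weak definition of $\Lambda_\omega^\xi$ and which identifies the bounded extension $T_\omega^*$ as the Hilbert adjoint of $T_\omega$ whenever $\omega$ is bounded Bessel. With this I would translate the relevant hypotheses into operator language: (i) the upper Bessel/frame bound is equivalent to $T_\omega$ being bounded with range in $\H$ (this is in the preliminaries); (ii) $\mu$-independence of $\omega$ is equivalent to injectivity of $T_\omega$, since $\int_X\xi(x)\ip{g}{\omega_x}d\mu=\ip{g}{T_\omega\bar\xi}$ vanishing for all $g\in\D$ says that $T_\omega\bar\xi\in\H$ is orthogonal to the dense subspace $\D$, hence $T_\omega\bar\xi=0$; (iii) the lower frame bound $A\|f\|^2\le\int_X|\ip{f}{\omega_x}|^2d\mu$ for $f\in\D$, extended to $\H$ by density, is exactly the statement that $T_\omega^*$ is bounded below, which by the closed-range theorem is equivalent to surjectivity of $T_\omega$.

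Granting these translations I would prove (a)$\Leftrightarrow$(c) first, as the cleanest link. Indeed $\omega$ is a Riesz distribution basis iff it is a $\mu$-independent distribution frame, iff $T_\omega$ is simultaneously bounded (upper bound), surjective (lower bound, via (iii)) and injective ($\mu$-independence, via (ii)); by the open mapping theorem this says precisely that $T_\omega$ is a topological isomorphism of $L^2(X,\mu)$ onto $\H$, which is (c). For (c)$\Leftrightarrow$(b) I would note that a Gel'fand distribution basis $\zeta$ has \emph{unitary} synthesis operator $T_\zeta$ (the Parseval property forces an isometry, $\mu$-independence forces injectivity), so that the coefficient functions are $\xi_f=T_\zeta^{-1}f$ and $W=T_\omega T_\zeta^{-1}$; since $T_\zeta^{-1}$ is already an isomorphism, $W$ is an isomorphism if and only if $T_\omega$ is, independently of the chosen $\zeta$.

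The implication (c)$\Rightarrow$(d) is then easy: the two-sided estimate in (d) merely expresses boundedness of the isomorphism $T_\omega$ and of its inverse, and totality follows from the \emph{easy} inclusion $\ker T_\omega^\times=\D\cap\ker T_\omega^*\subseteq\ker T_\omega^*=\{0\}$, the last equality because surjectivity of $T_\omega$ makes $T_\omega^*$ injective. What remains is (d)$\Rightarrow$(c) (equivalently (d)$\Rightarrow$(a)). The estimate in (d) immediately gives that $T_\omega$ is bounded above and bounded below, hence injective with \emph{closed} range $R(T_\omega)$; the only missing ingredient is surjectivity, i.e.\ density of $R(T_\omega)$.

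This density step is where I expect the real difficulty, precisely because totality is assumed only on $\D$: it reads $\ker T_\omega^\times=\D\cap\ker T_\omega^*=\{0\}$, whereas density of $R(T_\omega)$ is the a priori stronger assertion $\ker T_\omega^*=\{0\}$ on all of $\H$. I would try to close the gap by exploiting that $R(T_\omega)$ is already closed together with the density of $\D$ in $\H$: for $g\perp R(T_\omega)$ one has $T_\omega^* g=0$, and approximating $g$ by $f_n\in\D$ gives $T_\omega^\times f_n\to 0$ in $L^2(X,\mu)$ with $f_n\to g$, the aim being to detect $g$ through totality and force $g=0$. Reconciling the closedness of the range, the density of $\D$, and the fact that totality only tests vectors of $\D$ is the delicate point, and it is the step I would expect to demand the most care in the rigged Hilbert space setting.
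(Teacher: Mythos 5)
Your operator-theoretic reductions are correct as far as they go: the identifications of the upper bound with boundedness of $T_\omega$, of $\mu$-independence with injectivity of $T_\omega$, and of the lower frame bound (extended to $\H$) with surjectivity of $T_\omega$ are all sound, and they give (a)$\Leftrightarrow$(c), (c)$\Leftrightarrow$(b) (granting the existence of a Gel'fand distribution basis, which the statement of (b) tacitly presupposes) and (c)$\Rightarrow$(d). Note that the paper under review contains no proof of this proposition at all --- it is quoted from \cite{TTT} --- so your route is the natural one to attempt. The problem is the step you left open, (d)$\Rightarrow$(c), and your suspicion about it is more than justified: with the literal Definition \ref{tandg}(i) of totality, which tests only vectors of $\D$, that implication cannot be closed by any argument, because it is false as stated. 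Concretely, take $\SC\subset L^2(\mb R)\subset\SSS$, let $(X,\mu)=(\mb N,\gamma)$ with $\gamma$ the counting measure, put $h=\chi_{[0,1]}$, let $M=\{h\}^\perp$ and let $\{\omega_n\}_{n\in\mb N}$ be an orthonormal basis of $M$. Then $\omega$ is a (bounded) Bessel map, $\|T_\omega\xi\|^2=\|\xi\|_2^2$ for every $\xi\in \ell_2(\mb N)$, so the two-sided bound in (d) holds with $A=B=1$; and $\omega$ is total in the sense of Definition \ref{tandg}(i), because an $f\in\SC$ orthogonal to $M$ lies in $M^\perp=\mb C h$, and no nonzero multiple of $h$ has a continuous representative. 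Yet $R(T_\omega)=M\neq L^2(\mb R)$, so (c) fails; and (a) fails as well, since the lower frame bound on $\D$ fails: choosing $f_k\in C_0^\infty(\mb R)$ with $\|f_k-h\|\to 0$ gives $\|P_M f_k\|\to 0$ (where $P_M$ is the orthogonal projection onto $M$) while $\|f_k\|\to 1$. So ``closed range plus totality on $\D$'' genuinely does not yield surjectivity --- exactly the obstruction you isolated --- and your proposed repair (approximate $g\perp R(T_\omega)$ by $f_n\in\D$ and ``detect $g$ through totality'') is defeated by the same example: the $f_k$ above satisfy $T_\omega^\times f_k\to 0$ in $L^2$ while $f_k\to h\neq 0$, and totality says nothing about the limit $h\notin\D$.

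The conclusion to draw is that in (d) the word ``total'' must be read in the extended sense. The upper bound in (d) makes $\omega$ a bounded Bessel map, so the analysis operator extends to $T_\omega^*:\H\to L^2(X,\mu)$, $f\mapsto\ip{f}{\check\omega_x}$, and the correct hypothesis is: $f\in\H$ and $\ip{f}{\check\omega_x}=0$ $\mu$-a.e.\ imply $f=0$, i.e.\ $\ker T_\omega^*=\{0\}$ on all of $\H$. This is also exactly what your own (c)$\Rightarrow$(d) argument produces (surjectivity of $T_\omega$ gives injectivity of $T_\omega^*$ on $\H$, not merely on $\D$), so with this reading the cycle is consistent and your missing step becomes one line: the bounds in (d) give that $R(T_\omega)$ is closed, extended totality gives $R(T_\omega)^\perp=\ker T_\omega^*=\{0\}$, hence $T_\omega$ is onto, and the open mapping theorem makes it a topological isomorphism. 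Keep your (a)$\Leftrightarrow$(c), (c)$\Leftrightarrow$(b) and (c)$\Rightarrow$(d) as they are; replace the attempted density argument for (d)$\Rightarrow$(c) by this strengthened notion of totality, and state explicitly that the strengthening is being used, since with the paper's literal definition the implication fails.
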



If $\omega$ is a Riesz distribution basis with frame bounds $A,B$, then $\omega$ possesses a unique dual frame $\theta$ (so the canonical dual frame) which is also a Riesz distribution basis with frame bounds ${B^{-1}}$ and ${A}^{-1}$ (see \cite[Proposition 3.20]{TTT}). In particular, a Gel'fand distribution basis $\zeta$ coincides with its dual basis.

\begin{example}[{\cite[Example 3.18]{TTT}}] \label{ex_delta2} Let us come back to Example \ref{ex_delta}. The distribution frame $\delta: x\in {\mb R}\mapsto \delta_x\in \SSS$ is clearly $\lambda$-independent, then $\delta$ is a Gel'fand distribution frame.
\end{example}

	
	In Proposition \ref{prop_distr_discr} we made a consideration about the case $(X,\mu)=(\mathbb{N},\gamma)$ and discrete frames.
	Now we compare distribution frames with continuous frames. There is indeed a remarkable difference about the possibility to define `Riesz maps'. To explain the difference in details, we recall that a family $\{F_x\}_{x\in X}\subset \H$ is a Riesz continuous map if one of the following statements holds (see \cite{akrt,gabhan}) 
	\begin{itemize}
		\item[(i)] $\{F_x\}_{x\in X}$ is a continuous frame and the operator $C_F:\H \to L^2(X,\mu)$ defined by $(C_Ff)(x)=\ip{f}{F_x}$ is surjective;
		\item[(ii)] $\{F_x\}_{x\in X}$ is a continuous frame and  $\mu$-linearly independent, i.e. if $c:X\to \mathbb{C}$ and
		$
		\int_X c(x)\ip{f}{F_x}d\mu(x)=0$ for all $f\in \H$, then $c(x)=0$ $\mu$-almost everywhere.
	\end{itemize}
	It was proven in \cite[Corollary 4.3]{lll} and in \cite[Theorem 9]{bal_ms_2}, that Riesz continuous maps can be defined only if the space $(X,\mu)$ is atomic. In contrast, Riesz distribution maps can be defined with non-atomic measure spaces $(X,\mu)$ (as we have seen in Example \ref{ex_delta}). Note that the formulations of Riesz continuous map and Riesz distribution map are totally analogue.

\section{Distribution multipliers}\label{sect_3}

Let $\D[t]\subset \H \subset \D^\times[t^\times]$ be a rigged Hilbert space. In the distribution context, there is more than one way to define multipliers. The first way we describe consists of operators acting on $\D[t]$ with values in $\D^\times[t^\times]$.

We suppose that $\D[t]$ is a reflexive Fr\'echet space and  $(X,\mu)$ is a measure space with $\mu$ a $\sigma$-finite positive measure. Let $\omega,\theta:X\to\D^\times $ be two weakly measurable Bessel maps and $m\in L^\infty(X,\mu)$.
Then the sesquilinear form
$$
\Omega_{m,\omega,\theta}(f,g):=\int_X m(x)\ip{f}{\omega_x} \ip{\theta_x}{g} d\mu
$$
is defined for all $f,g\in\D$.
By Proposition \ref{prop2}(ii) we have
$$
|\Omega_{m,\omega,\theta}(f,g)|\leq \|m\|_\infty\|\ip{f}{\omega_x} \|_2\|\ip{\theta_x}{g}\|_2\leq \|m\|_\infty p(f)p(g)
$$
for all $f,g\in\D$. This means that $\Omega_{m,\omega,\theta}$ is jointly continuous on $\D[t]$ and then there exists an operator $\mathcal{M}_{m,\omega,\theta}\in\LDD$, such that
$$
\ip{\mathcal{M}_{m,\omega,\theta}f}{g}=\Omega_{m,\omega,\theta}(f,g), \qquad \forall f,g\in \D.
$$
For brevity we write $$
\mathcal{M}_{m,\omega,\theta} f= \int_X m(x)\ip{f}{\omega_x} {\theta_x} d\mu, \qquad \forall f\in \D
$$
and we call $\mathcal{M}_{m,\omega,\theta}$ the {\it outer} distribution multiplier of $\omega$ and $\theta$ with symbol $m$.

As in \cite{Balazs_basic_mult}, we have, if $T^\times_\omega$ and $T_\theta$ are the analysis and synthesis operators of $\omega$ and $\theta$, respectively, $D_m: L^2(X,\mu)\rightarrow L^2(X,\mu)$ is the multiplication by $m$ defined by $D_mf(x):=m(x)f(x)$, then  $\mathcal{M}_{m,\omega,\theta}=T_\theta D_m T^\times_\omega$. Moreover,  $\mathcal{M}_{m,\omega,\theta}^\dagger=\mathcal{M}_{\overline{m},\theta,\omega}$.

Now we move to the second way to define multipliers, namely operators acting on $\H$.
Again we consider a rigged Hilbert space $\D[t]\subset \H \subset \D^\times[t^\times]$, a measure space $(X,\mu)$ with $\mu$ a $\sigma$-finite positive measure, but the choice of $\omega,\theta$ and $m$ is more general. Indeed, let $\omega$, $\theta$ be two weakly measurable maps and $m:X\to \mb C$ a $\mu$-measurable function. Let us define the subspace  $
D(M_{m,\omega,\theta})$ of $f\in\D$ such that the integral
$$
\int_X m(x)\ip{f}{\omega_x} \ip{\theta_x}{g} d\mu
$$
is convergent for all $g\in\D$ and the linear functional
\begin{equation}
\label{funct}
g\mapsto \int_X m(x)\ip{f}{\omega_x} \ip{\theta_x}{g}d\mu
\end{equation}
is bounded with respect to the  norm of $\H$. An operator $M_{m,\omega,\theta}:D(M_{m,\omega,\theta})\rightarrow\H$ can be defined as follows: for every $f\in D(M_{m,\omega,\theta})$, $M_{m,\omega,\theta}f$ is the unique element in $\H$ associated to the functional \eqref{funct} by the Riesz lemma, i.e.
$$
\ip{M_{m,\omega,\theta} f}{g}=\int_X m(x)\ip{f}{\omega_x} \ip{\theta_x}{g} d\mu, \qquad \forall f\in D(M_{m,\omega,\theta} ),g\in \D.
$$
For shortness, we write $M_{m,\omega,\theta} f= \int_X m(x)\ip{f}{\omega_x} \theta_x$ and call $M_{m,\omega,\theta}$ the {\it distribution multiplier} of $\omega$ and $\theta$ with {\it symbol} $m$. If $\omega$ and $\theta$ are Bessel distribution maps (resp. Gel'fand bases, Riesz distribution bases, distribution frames), then $M_{m,\omega,\theta}$ is called a {\it Bessel distribution} (resp. {\it Gel'fand distribution, Riesz distribution, distribution frame}) multiplier.

In the language of representation of sesquilinear forms \cite{Kato,Schm}, we can say that $M_{m,\omega,\theta}$ is the operator associated to $\Omega_{m,\omega,\theta}$. In the discrete setting, operators associated to sesquilinear forms induced by sequences have been studied in \cite{Bag_sesq,Corso_seq,Corso_seq2}.

We first pay attention to distribution multipliers defined (resp. bounded) on $\D$.

\begin{prop}
	\label{Bessel_mult}
	Let $m\in L^\infty(X,\mu)$.
	\begin{enumerate}
		\item[(i)] If  $\omega$ is a Bessel distribution map and $\theta$ is a bounded Bessel distribution map, then $M_{m,\omega,\theta}$ is a well-defined operator $M_{m,\omega,\theta}:\D\to \H$.
		\item[(ii)] If  $\omega$ and $\theta$ are bounded Bessel distribution maps with bound $B_\omega, B_\theta$, respectively, then  $M_{m,\omega,\theta}$ is bounded and it extends to a bounded operator $\widehat{M}_{m,\omega,\theta}$ on $\H$ with norm $\|\widehat{M}_{m,\omega,\theta}\|\leq \sqrt{B_\omega B_\theta}\|m\|_\infty$.
		\item[(iii)] If $\omega,\theta$ are bounded Bessel maps, then $\widehat{M}_{m,\omega,\theta}=T_\theta D_m T^*_\omega$ and ${M_{m,\omega,\theta}}^*=\widehat{M}_{\overline{m},\omega,\theta}$.
	\end{enumerate}
\end{prop}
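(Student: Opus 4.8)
The plan is to derive all three parts from the single elementary estimate obtained by Cauchy--Schwarz in $L^2(X,\mu)$: for $f,g\in\D$,
$$
\int_X |m(x)\,\ip{f}{\omega_x}\,\ip{\theta_x}{g}|\,d\mu \le \|m\|_\infty \left(\int_X |\ip{f}{\omega_x}|^2 d\mu\right)^{1/2}\left(\int_X|\ip{\theta_x}{g}|^2 d\mu\right)^{1/2},
$$
and then to exploit the asymmetry between the roles of $\omega$ and $\theta$. For part (i) I would fix $f\in\D$. Since $\omega$ is a Bessel map, the first factor on the right is finite, while since $\theta$ is a \emph{bounded} Bessel map (Definition \ref{bbounded}) the second factor is $\le\sqrt{B_\theta}\,\|g\|$. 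Hence the defining integral of $M_{m,\omega,\theta}$ converges absolutely for every $g\in\D$, and the functional $g\mapsto\int_X m(x)\ip{f}{\omega_x}\ip{\theta_x}{g}\,d\mu$ is bounded in the norm of $\H$, with constant $\|m\|_\infty\sqrt{B_\theta}\left(\int_X|\ip{f}{\omega_x}|^2d\mu\right)^{1/2}$ depending on $f$. This is precisely the condition defining $D(M_{m,\omega,\theta})$, so $D(M_{m,\omega,\theta})=\D$ and $M_{m,\omega,\theta}\colon\D\to\H$ is well defined. The essential point is that boundedness of $\theta$ is what supplies the $\|g\|$-dependence, whereas for $\omega$ plain Besselness suffices because the $f$-factor is allowed to depend on $f$.

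For part (ii), with both maps bounded Bessel I would feed the two bounds $\left(\int_X|\ip{f}{\omega_x}|^2\right)^{1/2}\le\sqrt{B_\omega}\|f\|$ and $\left(\int_X|\ip{\theta_x}{g}|^2\right)^{1/2}\le\sqrt{B_\theta}\|g\|$ into the same estimate to obtain $|\ip{M_{m,\omega,\theta}f}{g}|\le \sqrt{B_\omega B_\theta}\,\|m\|_\infty\,\|f\|\,\|g\|$ for all $f,g\in\D$. Taking the supremum over $g\in\D$ with $\|g\|\le 1$ and using the density of $\D$ in $\H$ gives $\|M_{m,\omega,\theta}f\|\le\sqrt{B_\omega B_\theta}\,\|m\|_\infty\,\|f\|$. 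Thus $M_{m,\omega,\theta}$ is bounded on $\D$, and since $\D$ is dense it extends uniquely to a bounded operator $\widehat M_{m,\omega,\theta}$ on $\H$ with the claimed norm bound.

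For part (iii) I would rewrite the sesquilinear form through the analysis and synthesis operators. For $f,g\in\D$ one has $(T_\omega^\times f)(x)=\ip{f}{\omega_x}$ and $\ip{\theta_x}{g}=\overline{(T_\theta^\times g)(x)}$, so
$$
\ip{M_{m,\omega,\theta}f}{g}=\int_X (D_m T_\omega^\times f)(x)\,\overline{(T_\theta^\times g)(x)}\,d\mu = \ip{D_m T_\omega^\times f}{T_\theta^\times g}_{L^2(X,\mu)}.
$$
Because $\omega,\theta$ are bounded Bessel, $T_\omega^*$ and $T_\theta^*$ extend $T_\omega^\times$ and $T_\theta^\times$ on $\D$ (recalled after Definition \ref{bbounded}), so the last pairing equals $\ip{D_m T_\omega^* f}{T_\theta^* g}_{L^2(X,\mu)}=\ip{T_\theta D_m T_\omega^* f}{g}_{\H}$. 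The bounded operators $M_{m,\omega,\theta}$ and $T_\theta D_m T_\omega^*$ therefore agree on the dense subspace $\D$, whence $\widehat M_{m,\omega,\theta}=T_\theta D_m T_\omega^*$. Taking the Hilbert adjoint of this factorization and using $D_m^*=D_{\overline m}$ yields $(\widehat M_{m,\omega,\theta})^*=T_\omega D_{\overline m} T_\theta^*$, which by the same factorization is the bounded multiplier $\widehat M_{\overline m,\theta,\omega}$; equivalently, conjugating the defining integral directly gives $\ip{M_{m,\omega,\theta}^* g}{f}=\int_X \overline{m(x)}\,\ip{g}{\theta_x}\,\ip{\omega_x}{f}\,d\mu$, the same conclusion (note that this interchange of $\omega$ and $\theta$ mirrors the relation $\mathcal{M}_{m,\omega,\theta}^\dagger=\mathcal{M}_{\overline{m},\theta,\omega}$ for the outer multiplier).

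The computations are otherwise routine, and I would expect the only delicate step to be part (iii): the conjugations must be tracked carefully, and the identification of the $L^2$-pairing with the $\H$-pairing rests on the facts that $T_\omega^*,T_\theta^*$ extend the analysis operators on $\D$ and that $T_\theta$ maps $L^2(X,\mu)$ boundedly into $\H$. The main obstacle is thus bookkeeping rather than conceptual: ensuring that each passage from a form on $\D$ to an operator identity on $\H$ is justified by density together with the boundedness secured in part (ii).
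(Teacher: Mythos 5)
Your proof is correct, and it is essentially the argument the paper has in mind: the paper omits the proof entirely, remarking only that it is an adaptation of \cite[Lemma 3.3]{balaszetal}, and that adaptation is precisely what you wrote --- the Cauchy--Schwarz estimate in $L^2(X,\mu)$, with the asymmetric use of plain Besselness of $\omega$ versus bounded Besselness of $\theta$ in (i), density of $\D$ in $\H$ for (ii), and the factorization $\ip{M_{m,\omega,\theta}f}{g}=\ip{D_mT_\omega^*f}{T_\theta^*g}_{L^2(X,\mu)}$ for (iii).

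One point deserves emphasis. In part (iii) you prove ${M_{m,\omega,\theta}}^*=\widehat{M}_{\overline{m},\theta,\omega}$, i.e.\ with the roles of $\omega$ and $\theta$ interchanged, whereas the statement as printed reads ${M_{m,\omega,\theta}}^*=\widehat{M}_{\overline{m},\omega,\theta}$. Your version is the correct one: the adjoint of $T_\theta D_m T_\omega^*$ is $T_\omega D_{\overline{m}}T_\theta^*$, which is the multiplier having $\theta$ as analysis map and $\omega$ as synthesis map, and these two operators genuinely differ when $\omega\neq\theta$ (e.g.\ for $\omega_x=\delta_x$ and $\theta_x(y)=e^{-2\pi i xy}$ as in Example \ref{ld}). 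The printed index order appears to be a typo, as confirmed by the analogous relations $\mathcal{M}_{m,\omega,\theta}^\dagger=\mathcal{M}_{\overline{m},\theta,\omega}$ for the outer multiplier and ${M_{m,\omega,\theta}}^*=M_{\overline{m},\theta,\omega}$ in Proposition \ref{pro_41}(ii), both of which swap the two maps. So you have, correctly and without flagging it, proved the repaired form of the identity rather than the one literally stated; it would strengthen your write-up to point out this discrepancy explicitly.
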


The proof is an adaptation of \cite[Lemma 3.3]{balaszetal} and it is omitted. However, we want to make the following remark about Proposition \ref{Bessel_mult}.

\begin{rem}
	Let $m\in L^\infty(X,\mu)$ and $\omega$, $\theta$ Bessel distribution maps, then $M_{m,\omega,\theta}$ is not necessarily bounded in the norm of $\H$. Indeed  
		let us consider $\SC \subset L^2(\mb R)\subset \SSS$ and
		$\omega: \mb R \to  \SSS$ defined by $\omega_x=x\delta_x$, i.e. the distributions
		$\ip{f}{x\delta_x}=xf(x)$ for $f\in \SC$. Then $\omega$ is a Bessel distribution map since
		$
		\int_\mb R |\ip{f}{x\delta_x}|^2dx = \int_\mb R |xf(x)|^2dx
		$
		is finite for all $f\in \SC$. Let $\theta$ be the distribution frame given by $\theta_x=\delta_x$ and $m(x)=1$ for $x\in \mathbb{R}$, then $M_{m,\omega,\theta}$ is defined on $f\in \SC$ and $(M_{m,\omega,\theta} f)(x) =xf(x)$ for $x\in \mb R$. Clearly,  $M_{m,\omega,\theta}$ is not bounded.
\end{rem}

Multipliers with a bounded inverse defined on the whole space have a special interest, since they lead to reconstruction formulas, as shown in the discrete case in \cite{bal_invers,bal_dual_frame,bal_dual_frame2}. In the following result we show how reconstruction formulas can be found by a distribution multiplier having a right or left inverse. 

\begin{thm}
	\label{prop_33}
	Let $\omega,\theta:X\to \D^\times$ be weakly measurable maps and $m:X\to \mathbb{C}$ such that the distribution multiplier  $M_{m,\omega,\theta}$ is defined on $\D$.
	\begin{enumerate}
		\item[(i)] If there exists $J\in \mathcal{L}(\D)$ such that $M_{m,\omega,\theta} J f=f$ for every $f\in \D$, then  the weakly measurable map $\rho:X\to \H$ defined by $\rho_x=J^\dagger (\overline{m(x)}\omega_x)$ satisfies
		$$
		\ip{f}{g}=\int_X \ip{f}{\rho_x}\ip{\theta_x}{g}d\mu, \qquad\forall f,g\in \D.
		$$
		\item[(ii)] If there exists $K\in \mathcal{L}(\D^\times)$ such that $K M_{m,\omega,\theta} f=f$ for every $f\in \D$
		the weakly measurable map $\tau:X\to \H$ defined by $\tau_x=K ({m(x)}\theta_x)$ satisfies
		$$
		\ip{f}{g}=\int_X \ip{f}{\omega_x}\ip{\tau_x}{g}d\mu, \qquad\forall f,g\in \D.
		$$
	\end{enumerate}
\end{thm}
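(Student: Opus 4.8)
The plan is to obtain both reconstruction formulas by substituting the one-sided inverse into the weak identity defining $M_{m,\omega,\theta}$ and then transferring that inverse onto $\omega$ (resp.\ $\theta$) by means of the $\mathcal{L}(\D)$--$\mathcal{L}(\D^\times)$ duality. No analytic estimate is involved: once $J$ and $K$ are moved across the pairing, the two formulas fall out, and the only genuine work is keeping the conjugation conventions straight. Throughout I use $\ip{h}{F}=\overline{\ip{F}{h}}$ for $h\in\D$, $F\in\D^\times$, the adjoint $J^\dagger\in\mathcal{L}(\D^\times)$ of $J\in\mathcal{L}(\D)$ determined by $\ip{J^\dagger F}{h}=\ip{F}{Jh}$, and the adjoint $K^\times\in\mathcal{L}(\D)$ of $K\in\mathcal{L}(\D^\times)$ with $\ip{KF}{h}=\ip{F}{K^\times h}$.

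For (i), I start from $M_{m,\omega,\theta}Jf=f$. Since $Jf\in\D\subseteq D(M_{m,\omega,\theta})$, the defining identity of the multiplier, applied to $Jf$, gives for all $f,g\in\D$
$$\ip{f}{g}=\ip{M_{m,\omega,\theta}Jf}{g}=\int_X m(x)\ip{Jf}{\omega_x}\ip{\theta_x}{g}\,d\mu.$$
The key manipulation is to free $f$ from $J$: using the convention and the adjoint, $\ip{Jf}{\omega_x}=\overline{\ip{\omega_x}{Jf}}=\overline{\ip{J^\dagger\omega_x}{f}}=\ip{f}{J^\dagger\omega_x}$, and then, since $\ip{f}{\cdot}$ is conjugate linear on $\D^\times$, one has $m(x)\ip{f}{J^\dagger\omega_x}=\ip{f}{\overline{m(x)}J^\dagger\omega_x}=\ip{f}{\rho_x}$ with $\rho_x=J^\dagger(\overline{m(x)}\omega_x)$. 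Substituting into the displayed integral yields the claimed formula.

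Part (ii) is the mirror image, now moving $K$ instead of $J$. From $KM_{m,\omega,\theta}f=f$ and the adjoint relation I write, for $f,g\in\D$, $\ip{f}{g}=\ip{KM_{m,\omega,\theta}f}{g}=\ip{M_{m,\omega,\theta}f}{K^\times g}$; expanding the right-hand side through the multiplier identity (legitimate because $f\in\D\subseteq D(M_{m,\omega,\theta})$ and $K^\times g\in\D$) gives $\int_X m(x)\ip{f}{\omega_x}\ip{\theta_x}{K^\times g}\,d\mu$. Pushing $K$ onto $\theta_x$ via $\ip{\theta_x}{K^\times g}=\ip{K\theta_x}{g}$ and absorbing $m(x)$ in the linear slot, $m(x)\ip{K\theta_x}{g}=\ip{K(m(x)\theta_x)}{g}=\ip{\tau_x}{g}$ with $\tau_x=K(m(x)\theta_x)$, produces the second formula.

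It remains to record the regularity of $\rho$ and $\tau$. Weak measurability is immediate from the identities just obtained: $x\mapsto\ip{f}{\rho_x}=m(x)\ip{Jf}{\omega_x}$ and $x\mapsto\ip{\tau_x}{g}=m(x)\ip{\theta_x}{K^\times g}$ are products of the measurable symbol with the weakly measurable pairings of $\omega$ and $\theta$ against the fixed vectors $Jf,K^\times g\in\D$. The only point demanding real care---and the likeliest place for an error---is the bookkeeping of the conjugate-dual pairing: the scalar $m(x)$ is conjugated when absorbed into the conjugate-linear slot (hence $\overline{m}$ in $\rho_x$) but not when absorbed into the linear slot (hence $m$ in $\tau_x$), and the two adjoint conventions must be matched against $\ip{h}{F}=\overline{\ip{F}{h}}$ at each transfer. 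I would also confirm at this stage that each $\rho_x$, $\tau_x$ indeed defines an element of the asserted target space.
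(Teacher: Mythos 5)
Your proof is correct and follows essentially the same route as the paper's: in (i) you expand $\ip{M_{m,\omega,\theta}Jf}{g}$ and transfer $J$ onto $\omega_x$ via the adjoint, absorbing $\overline{m(x)}$ into the conjugate-linear slot; in (ii) you move $K$ across the pairing to $K^\times g$ and then push it back onto $\theta_x$, exactly as in the paper. The extra remarks on weak measurability and on the conjugation bookkeeping are sound additions, not deviations.
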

\begin{proof}
	(i) Let $J\in \mathcal{L}(\D)$ as in the statement. Then
	\begin{align*}
	\ip{f}{g}=\ip{M_{m,\omega,\theta} J f}{g}=\int_X m(x)\ip{Jf}{\omega_x}\ip{\theta_x}{g} d\mu=\int_X \ip{f}{J^\dagger (\overline{m(x)}\omega_x)}\ip{\theta_x}{g}d\mu.
	\end{align*}
	
	(ii) Let $K\in \mathcal{L}(\D^\times)$ as in the statement. Then
	\begin{align*}
	\ip{f}{g}&=\ip{KM_{m,\omega,\theta} f}{g}=\ip{M_{m,\omega,\theta} f}{K^\dagger g}=\int_X m(x)\ip{f}{\omega_x}\ip{\theta_x}{K^\dagger g} d\mu\\
	&=\int_X \ip{f}{\omega_x}\ip{K ({m(x)}\theta_x)}{g}d\mu. \qedhere
	\end{align*}
\end{proof}

	\begin{example}
		Let us consider again the rigged Hilbert space $\SC \subset L^2(\mb R) \subset \SSS$ and $\omega=\theta$ the distribution frames defined by $\omega_x=\theta_x=\delta_x$ for every $x\in \mb R$ and $m\in C^\infty(\mb R)$ a function such that $0<\inf_{x\in \mb R}|m(x)| \leq \sup_{x\in \mb R}|m(x)|<\infty $. The  multiplier  $M_{m,\omega,\theta}$ is of course defined on $\SC$ and $M_{m,\omega,\theta} f=mf$. Clearly, the operator $J:\SC \to \SC$ defined by $Jf=m^{-1}f$ belongs to $\mathcal{L}(\SC)$ and the operator $K:\SSS \to \SSS$ defined by $KF=m^{-1}F$ belongs to $\mathcal{L}(\SSS)$. Moreover, $KM_{m,\omega,\theta}  f=M_{m,\omega,\theta} J f=f$ for every $f\in \SC$.  The reconstruction formulas in Theorem \ref{prop_33} hold in particular with $\rho_x=\tau_x =\delta_x$.
	\end{example}

In Section \ref{sect_4} we will give conditions for a Riesz multiplier to be invertible with bounded inverse. 



\section{Unbounded distribution multipliers}\label{sect_4}

In Proposition \ref{Bessel_mult} we gave a condition for a  distribution multiplier to be bounded. Not only bounded multipliers are interesting, of course; we refer to \cite{bit_jmp,Bag_sesq} where unbounded discrete multipliers have been studied in the context of non-selfadjoint hamiltonians. So in this section we want to analyze some aspects of unbounded distribution multipliers.

	Assuming that $\D[t]$ is a reflexive  Fr\'echet space,  $\omega,\theta$ are Bessel distribution maps and $m\in L^\infty(X,\mu)$, we easily see that  $M_{m,\omega,\theta}$ is actually a restriction of $\mathcal{M}_{m,\omega,\theta}$. Indeed,  $D(M_{m,\omega,\theta})=\{f\in \D: \mathcal{M}_{m,\omega,\theta}f\in \H\}$ and $M_{m,\omega,\theta}={\mathcal{M}_{m,\omega,\theta}}_{|D(M_{m,\omega,\theta})}$. This fact leads, for instance, to the following conclusions.
	
	\begin{prop}
		\label{pro_41}
		Let $\D[t]$ be a reflexive  Fr\'echet space. Let $\omega,\theta$ be Bessel distribution maps and $m\in L^\infty(X,\mu)$. If  $\mathcal{M}_{m,\omega,\theta}:\D\to \D^\times $ is bijective with a bounded inverse, then
		\begin{enumerate}
			\item[(i)] $M_{m,\omega,\theta}:D(M_{m,\omega,\theta})\to \H$ is bijective, densely defined and has a bounded inverse  (consequently $M_{m,\omega,\theta}$ is closed);
			\item[(ii)] ${M_{m,\omega,\theta}}^*=M_{\overline{m},\theta,\omega}$.
		\end{enumerate}
		
	\end{prop}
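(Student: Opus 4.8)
The plan is to exploit the key observation stated just before the proposition: when $\omega,\theta$ are Bessel distribution maps and $m\in L^\infty(X,\mu)$, the multiplier $M_{m,\omega,\theta}$ is precisely the restriction of the continuous operator $\mathcal{M}_{m,\omega,\theta}\in\LDD$ to the set $\{f\in\D:\mathcal{M}_{m,\omega,\theta}f\in\H\}$. So everything reduces to transferring the bijectivity-with-bounded-inverse hypothesis on $\mathcal{M}_{m,\omega,\theta}$ down to the Hilbert space level.

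For part (i), I would first establish surjectivity. Given $h\in\H\subset\D^\times$, reflexivity and the hypothesis give a unique $f\in\D$ with $\mathcal{M}_{m,\omega,\theta}f=h$; since $h\in\H$, this $f$ lies in $D(M_{m,\omega,\theta})$ and $M_{m,\omega,\theta}f=h$, so $M_{m,\omega,\theta}$ maps onto $\H$. Injectivity is inherited directly from that of $\mathcal{M}_{m,\omega,\theta}$. For boundedness of the inverse, I would invoke the bounded inverse $\mathcal{M}_{m,\omega,\theta}^{-1}\in\mathcal{L}(\D^\times,\D)$: restricted to $h\in\H$ it produces $f=M_{m,\omega,\theta}^{-1}h$, and continuity from $\D^\times[t^\times]$ into $\D[t]$ together with the continuous embeddings $\H\hookrightarrow\D^\times$ and $\D\hookrightarrow\H$ yields a norm estimate $\|M_{m,\omega,\theta}^{-1}h\|\le C\|h\|$. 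Density of the domain then follows formally: $D(M_{m,\omega,\theta})=R(M_{m,\omega,\theta}^{-1}\!\upharpoonright_{\H})$, and since $M_{m,\omega,\theta}$ is onto $\H$ with bounded inverse, its domain equals $M_{m,\omega,\theta}^{-1}(\H)$, which is dense because it contains the image of the dense set $\H$ under a bounded operator into $\H$ — more carefully, $M_{m,\omega,\theta}$ is a bounded bijection onto $\H$ from its domain, hence closed, and a bijective operator with everywhere-defined bounded inverse is automatically closed, giving the parenthetical claim.

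For part (ii), I would compute the Hilbert adjoint directly from the defining sesquilinear identity. For $f\in D(M_{m,\omega,\theta})$ and $g\in D(M_{\overline m,\theta,\omega})$ one has
$$
\ip{M_{m,\omega,\theta}f}{g}=\int_X m(x)\ip{f}{\omega_x}\ip{\theta_x}{g}\,d\mu=\overline{\int_X \overline{m(x)}\ip{g}{\theta_x}\ip{\omega_x}{f}\,d\mu}=\ip{f}{M_{\overline m,\theta,\omega}g},
$$
which shows $M_{\overline m,\theta,\omega}\subseteq {M_{m,\omega,\theta}}^*$. The reverse inclusion is the delicate point and is where I would apply the symmetry of the hypotheses: $\mathcal{M}_{\overline m,\theta,\omega}=\mathcal{M}_{m,\omega,\theta}^\dagger$ is also bijective with bounded inverse, so part (i) applies to $M_{\overline m,\theta,\omega}$ as well, making it a bijection onto $\H$ with bounded inverse. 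Since ${M_{m,\omega,\theta}}^*$ already contains the surjective (hence maximal in the relevant sense) operator $M_{\overline m,\theta,\omega}$, and an adjoint of a bijection with bounded inverse is itself such a bijection, the two operators must coincide.

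The main obstacle I anticipate is the reverse inclusion in (ii): establishing $M_{\overline m,\theta,\omega}\subseteq{M_{m,\omega,\theta}}^*$ is the routine direction, but proving equality requires ruling out that ${M_{m,\omega,\theta}}^*$ is a proper extension. The clean way to close this gap is to note that $M_{m,\omega,\theta}$ is a bounded-below closed bijection onto $\H$, so its adjoint is a bijection onto $\H$ too, and an operator sandwiched as $M_{\overline m,\theta,\omega}\subseteq{M_{m,\omega,\theta}}^*$ with both sides bijective forces equality. Verifying that $\mathcal{M}_{m,\omega,\theta}^\dagger=\mathcal{M}_{\overline m,\theta,\omega}$ inherits the bounded-inverse property from $\mathcal{M}_{m,\omega,\theta}$ — using that the $\dagger$-involution on $\LDD$ preserves invertibility — is the supporting fact that makes this argument go through.
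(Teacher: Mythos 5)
Your overall route is the same as the paper's: regard $M_{m,\omega,\theta}$ as the restriction of $\mathcal{M}_{m,\omega,\theta}$ to $\{f\in\D:\mathcal{M}_{m,\omega,\theta}f\in\H\}$, transfer bijectivity and the bounded inverse down to $\H$ through the continuous embeddings, and get (ii) by sandwiching $M_{\overline{m},\theta,\omega}\subseteq {M_{m,\omega,\theta}}^*$ between two bijections, using that the $\dagger$-involution preserves invertibility so that (i) applies to $M_{\overline{m},\theta,\omega}$ as well. Most of this is sound: surjectivity and injectivity, the norm estimate $\|{M_{m,\omega,\theta}}^{-1}h\|\leq C\|h\|$ obtained from the continuity of $\mathcal{M}_{m,\omega,\theta}^{-1}:\D^\times[t^\times]\to\D[t]$ and the two embeddings, the closedness remark, and the whole of part (ii) (your ``inclusion plus bijectivity forces equality'' step is exactly the paper's: if $A\subseteq B$, $A$ is surjective and $B$ is injective, then $A=B$).

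The genuine gap is the density of $D(M_{m,\omega,\theta})$. You assert that ${M_{m,\omega,\theta}}^{-1}(\H)$ is dense ``because it contains the image of the dense set $\H$ under a bounded operator into $\H$''. That inference is false: a bounded, even injective, operator on $\H$ need not have dense range, so it does not map dense sets to dense sets. Concretely, the unilateral shift is a bounded bijection from the non-dense closed subspace $\{x\in\ell_2(\mb N): x_1=0\}$ onto $\ell_2(\mb N)$ with bounded inverse; hence ``bijective from its domain onto $\H$ with bounded inverse'' --- which is all your argument actually uses --- cannot by itself force the domain to be dense, and no purely Hilbert-space reformulation of it will. What closes the gap, and what the paper does, is to argue at the level of the rigged Hilbert space: since $\D[t]$ is reflexive, $\H$ is dense in $\D^\times[t^\times]$; since $\mathcal{M}_{m,\omega,\theta}^{-1}:\D^\times[t^\times]\to\D[t]$ is continuous and surjective, the set $D(M_{m,\omega,\theta})=\mathcal{M}_{m,\omega,\theta}^{-1}(\H)$ is dense in $\D[t]$; and because $t$ is stronger than the Hilbert norm topology and $\D$ is dense in $\H$, this set is then dense in $\H$. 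In short, the density must come from the continuity of $\mathcal{M}_{m,\omega,\theta}^{-1}$ in the $t^\times$--$t$ topologies together with the density of $\H$ in $\D^\times$, not from the boundedness of ${M_{m,\omega,\theta}}^{-1}$ on $\H$; note also that part (ii) silently relies on this density (both to form ${M_{m,\omega,\theta}}^*$ and to apply (i) to $M_{\overline{m},\theta,\omega}$), so the gap propagates there too.
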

	\begin{proof}
		\begin{enumerate}
			\item[(i)] 	That  $M_{m,\omega,\theta}:D(M_{m,\omega,\theta})\to \H$ is bijective follows easily since $M_{m,\omega,\theta}$ is a restriction of $\mathcal{M}_{m,\omega,\theta}$. Since the inclusions 
			$\D[t] \subset  \H \subset\D^\times[t^\times]$ are continuous
			\begin{itemize}
				\item  there exists a continuous seminorm $p$ on $\D[t]$ and $\|f\|\leq   p(f)$ for all $f\in \D$;
				\item for all continuous seminorms $q$ on $\D^\times[t^\times]$ there exists $\alpha_q>0$ such that we have $  q(f)\leq \alpha_q \|f\|$ for all $f\in \D$.
			\end{itemize}
			By hypothesis ${\mathcal{M}_{m,\omega,\theta}}^{-1}:\D^\times\to \D$ is bounded, so there exists a continuous semi-norm $q$ on $\D^\times[t^\times]$ such that for all continuous semi-norms $p$ on $\D[t]$ we have $p({M_{m,\omega,\theta}}^{-1} F)\leq q(F)$ for all $F\in \D^\times[t^\times]$.
			Hence, for all $h\in \H$
			$$
			\|{M_{m,\omega,\theta}}^{-1}h\|\leq  p({M_{m,\omega,\theta}}^{-1}h)=   p({\mathcal{M}_{m,\omega,\theta}}^{-1}h) \leq  q(h) \leq \alpha_q \|h\|.
			$$
			Thus $M_{m,\omega,\theta}$ has a bounded inverse. The continuity of ${\mathcal{M}_{m,\omega,\theta}}^{-1}$ implies also that $D(M_{m,\omega,\theta})$ is dense, because  $D(M_{m,\omega,\theta})$ is the inverse image of $\H$ which is dense in $\D^\times[t^\times]$.
			\item[(ii)] Clearly, ${M_{m,\omega,\theta}}^*$ is bijective and $M_{\overline{m},\theta,\omega}\subseteq {M_{m,\omega,\theta}}^*$. The conclusion of point (i) holds also for $M_{\overline{m},\theta,\omega}$ since $M_{\overline{m},\theta,\omega}$ coincides with a restriction of $\mathcal{M}_{m,\omega,\theta}^\dagger=\mathcal{M}_{\overline{m},\theta,\omega}$. Thus we can conclude that $M_{\overline{m},\theta,\omega}= {M_{m,\omega,\theta}}^*$. \qedhere
		\end{enumerate}		
	\end{proof}

Even though $M_{m,\omega,\theta}$ is not necessarily bounded, Proposition \ref{pro_41} makes use of boundedness of $m$. This may be a strong hypothesis, thus we now look for less restrictive assumptions to ensure that $M_{m,\omega,\theta}$ is densely defined.

In the discrete context it is very easy to prove that a multiplier $M_{m,\phi,\psi}$ of a Hilbert space $\H$, where $\phi=\{\phi_n\}_{n\in \mb N}$ is a Riesz basis and $\psi=\{\psi_n\}_{n\in \mb N}$ a sequence of $\H$, is densely defined whatever the symbol $m=\{m_n\}_{n\in \mb N}$ is. Indeed, there exist a unique total sequence (in particular a Riesz basis) $\widetilde \phi=\{\widetilde{\phi}_n\}$ biorthogonal to $\phi$, i.e. $\ip{\widetilde{\phi}_m}{\phi_n}=\delta_{m,n}$ (the Kronecker symbol) for all $m,n\in \mb N$.
Thus $D(M_{m,\phi,\psi})$ is dense, because it contains $\widetilde \phi$.

On the contrary, when $\D\subset \H \subset \D^\times$ is a rigged Hilbert space and $\omega:X\to \D^\times$ is a Riesz distribution basis, then we may not find a function $\rho:X\to \D$ which is biorthogonal to $\omega$ in the sense that
$$\ip{\rho_y}{\omega_x}=\begin{cases}
1 \quad \text{ if } x=y,\\
0 \quad \text{ if } x\neq y.
\end{cases}
$$
Indeed,	let us consider the Riesz distribution basis given by the Dirac deltas $\omega_x=\delta_x$, $x\in \mb R$, on the rigged Hilbert space $\SC \subset L^2(\mb R) \subset \SC^\times $ (Example \ref{ex_delta}). Then there is no $x\in \mb R$ and $f\in \SC $ such that $f(x):=\ip{f}{\delta_x}=1$ and $f(z):=\ip{f}{\delta_z}=0$ for all $z\neq x$. Thus we have to manage a new problem in order to study densely defined distribution multipliers.

	Taking again the example of $\omega_x=\delta_x$, $x\in \mb R$, on  $\SC \subset L^2(\mb R) \subset \SC^\times $, we note that for any symbol $m: \mb R \to \mb C$ and for $\theta=\omega$ the multiplier $M_{m,\omega,\theta}$ is densely defined. We will give the proof in Theorem \ref{th_hyper_pseudo} in a more general context. Here we confine ourselves to give the following remark. Note that we say that a subset $V$ of a Hilbert space $\H$ is {\it total} if $\ip{f}{h}=0$ for all $f\in V$ implies $h=0$ (then the linear span of $V$ is dense in $\H$).
	
	\begin{rem}
		\label{rem_hyper}
		Let $\lambda$ the Lebesgue measure on $\mb R$. Let $\alpha:\mb R\to \mb C$ be a positive $\lambda$-measurable function and define $V_\alpha:=\{f\in C_0^\infty (\mb R): |f(x)|\leq \alpha(x), x\in \mb R\}$. We prove that the subset $V_\alpha$ is total in $L^2(\mb R)$ dividing the proof into three steps.
		
		First of all, $\alpha$ is locally bounded away from zero in a.e. $x\in \mb R$, i.e. $B^c:=\mb R\backslash B$ is measurable with measure zero, where
		$$B=\{x\in \mb R:\text{there exists an interval $U_x\subset \mb R$ of $x$ such that essinf$_{y\in U_x} \alpha(y)>0$}\}.$$
		Indeed, fix $n\in \mb N$. For every $x\in B^c$, there exists a measurable set $U_{n,x}$ containing $x$ such that essinf$_{y\in U_{n,x}} \alpha(y)\leq\frac{1}{n}$. Then $B^c\subseteq \cup_{x\in B^c} U_{n,x}\subseteq \{y\in \mb R:\alpha(y) \leq \frac{1}{n}\}$, thus the outer measure $\lambda_o(B^c)\leq \lambda(\{y\in \mb R:\alpha(y) \leq \frac{1}{n}\})\to 0$ for $n\to \infty$.
		Hence $B^c$ is measurable with measure zero.

		Now, if $x\in B$ and $m_x=\;$essinf$_{y\in U_x} \alpha(y)>0$, then $m_x\chi_{I}\in V_\alpha$ where $\chi_{I}$ is the characteristic function of any interval $I\subset U_x$. Taking into account that $B^c$ has measure zero, we conclude that  there is a subset $V$ total in $L^2(\mb R)$ such that every $f\in V$ satisfies $f(x)=m\chi_{I}$ (for some $m>0$ and an interval $I$) and $0\leq f(x)\leq \alpha(x)$ for every $x\in \mb R$.	
		
		Finally, every function $f\in V$ can be approximated with $C_0^\infty$ functions $\{f_k\}$ with $0\leq f_k\leq f$. Hence, $V_\alpha$ is total.
	\end{rem}
	
	The reason of talking about the example $\omega_x=\delta_x$, is that it suggests to consider variations of the condition of biorthogonality as stated in the next definitions.
	For a better comparison, we rewrite a property of $V_\alpha$ (with $\alpha:\mb R\to \mb C$ a positive measurable function) in the following way: for every $f\in V_\alpha$ there exists a bounded subset $X_f\subset \mb R$ and $|\ip{f}{\delta_x}|\leq \alpha(x)$ for $x\in X_f$ and $\ip{f}{\delta_x}=0$ for $x\notin X_f$. 

\begin{defn}
	\label{def_typeAB}
	Let $\D\subset \H \subset \D^\times$ be a rigged Hilbert space and $\omega:X\to \D^\times$ a weakly measurable	function. We say that
	\begin{enumerate}
		\item[(i)]  $\omega$ is {\it pseudo-orthogonal} if  there exists a  subset $V\subset \D$ total in $\H$ such that for every $f\in V$ there exists a measurable subset $X_f\subset  X$ with $\mu(X_f)<\infty$, $\sup_{x\in X_f}|\ip{f}{\omega_x}|<\infty$ for $x\in X_f$ and $\ip{f}{\omega_x}=0$ for $x\notin X_f$;
		\item[(ii)]  $\omega$ is {\it hyper-orthogonal} if for every positive measurable function $\alpha:X\to \mb C$ there exists a subset $V_\alpha\subset \D$ total in $\H$ such that for every $f\in V_\alpha$ there exists a measurable subset $X_f\subset  X$ with $\mu(X_f)<\infty$, $|\ip{f}{\omega_x}|\leq \alpha(x)$ for $x\in X_f$ and $\ip{f}{\omega_x}=0$ for $x\notin X_f$.
	\end{enumerate}
\end{defn}

Note that these definitions are covered by a Riesz (discrete) basis $\{\phi_n\}_{n\in \mb N}$ (more generally by a sequence $\{\phi_n\}_{n\in \mb N}$  having a total biorthogonal sequence $\{\psi_n\}_{n\in \mb N}$, i.e. $\ip{\phi_n}{\psi_m}=\delta_{n,m}$). Furthermore,
if $\omega$ is hyper-orthogonal then it is also pseudo-orthogonal. We are now able to formulate results about the density of domains of distribution multipliers. We denote by $ L^2_{loc}(X,\mu)$ the space of measurable functions $f$ on $X$ such that $f\in L^2(U)$ for every bounded measurable subset $U\subseteq X$.

\begin{thm}
	\label{th_hyper_pseudo}
	Let $\D[t]\subset \H \subset \D^\times[t^\times]$ be a rigged Hilbert space, $\omega:X\to \D^\times$ a weakly measurable	function, $\theta:X\to \D^\times$ a bounded Bessel distribution map with  Bessel bound $B_\theta$ and $m:X\to \mb C$ a $\mu$-measurable function.
	\begin{enumerate}
		\item[(i)] If $\omega$ is pseudo-orthogonal and $m\in L^2_{loc}(X,\mu)$, then the distribution multiplier $M_{m,\omega,\theta}$ is densely defined.
		\item[(ii)] If $\omega$ is a bounded Bessel distribution map and hyper-orthogonal, then the distribution multiplier $M_{m,\omega,\theta}$ is densely defined.
	\end{enumerate}
\end{thm}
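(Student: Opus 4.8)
The plan is to show that the domain $D(M_{m,\omega,\theta})$ contains a total subset of $\H$ in each case, which suffices for density since the linear span of a total set is dense. The key is to exploit the (pseudo- or hyper-)orthogonality of $\omega$ to control the defining integral, while using the bounded Bessel property of $\theta$ to guarantee that the resulting functional \eqref{funct} is bounded in the norm of $\H$. Recall that $f$ lies in $D(M_{m,\omega,\theta})$ provided the integral $\int_X m(x)\ip{f}{\omega_x}\ip{\theta_x}{g}\,d\mu$ converges for all $g\in\D$ and the functional $g\mapsto \int_X m(x)\ip{f}{\omega_x}\ip{\theta_x}{g}\,d\mu$ is $\|\cdot\|$-bounded.

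\emph{Proof of (i).} Since $\omega$ is pseudo-orthogonal, fix the total subset $V\subset\D$ from Definition \ref{def_typeAB}(i). For a given $f\in V$ there is a measurable set $X_f$ with $\mu(X_f)<\infty$, the function $x\mapsto\ip{f}{\omega_x}$ is bounded on $X_f$ (say by a constant $C_f$) and vanishes off $X_f$. Then for every $g\in\D$,
\begin{align*}
\left|\int_X m(x)\ip{f}{\omega_x}\ip{\theta_x}{g}\,d\mu\right|
&=\left|\int_{X_f} m(x)\ip{f}{\omega_x}\ip{\theta_x}{g}\,d\mu\right|\\
&\leq C_f\left(\int_{X_f}|m(x)|^2\,d\mu\right)^{1/2}\left(\int_{X_f}|\ip{\theta_x}{g}|^2\,d\mu\right)^{1/2}.
\end{align*}
Because $m\in L^2_{loc}(X,\mu)$ and $\mu(X_f)<\infty$, the factor $\left(\int_{X_f}|m|^2\,d\mu\right)^{1/2}$ is finite, and since $\theta$ is a bounded Bessel map with bound $B_\theta$ the last factor is dominated by $\sqrt{B_\theta}\,\|g\|$. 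Hence the integral converges absolutely and the functional $g\mapsto\int_X m\ip{f}{\omega_x}\ip{\theta_x}{g}\,d\mu$ is bounded in the norm of $\H$, so $f\in D(M_{m,\omega,\theta})$. Thus $V\subseteq D(M_{m,\omega,\theta})$, and since $V$ is total the domain is dense.

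\emph{Proof of (ii).} Here we cannot assume $m$ is locally square-integrable, so we must instead choose the total set to absorb $m$. Since $\omega$ is hyper-orthogonal, apply Definition \ref{def_typeAB}(ii) with the positive measurable weight $\alpha(x):=(1+|m(x)|)^{-1}$, obtaining a total subset $V_\alpha\subset\D$. For $f\in V_\alpha$ there is $X_f$ with $\mu(X_f)<\infty$, $|\ip{f}{\omega_x}|\leq\alpha(x)$ on $X_f$ and $\ip{f}{\omega_x}=0$ off $X_f$. Then $|m(x)\ip{f}{\omega_x}|\leq |m(x)|/(1+|m(x)|)\leq 1$ on $X_f$, so for every $g\in\D$,
\begin{align*}
\left|\int_X m(x)\ip{f}{\omega_x}\ip{\theta_x}{g}\,d\mu\right|
&\leq \int_{X_f}|\ip{\theta_x}{g}|\,d\mu
\leq \mu(X_f)^{1/2}\left(\int_{X_f}|\ip{\theta_x}{g}|^2\,d\mu\right)^{1/2}\\
&\leq \mu(X_f)^{1/2}\sqrt{B_\theta}\,\|g\|,
\end{align*}
using the Cauchy--Schwarz inequality on the finite-measure set $X_f$ and the bounded Bessel bound of $\theta$. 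Thus the functional is bounded in the $\H$-norm and $f\in D(M_{m,\omega,\theta})$, giving $V_\alpha\subseteq D(M_{m,\omega,\theta})$ and hence density. (The additional hypothesis that $\omega$ is a bounded Bessel map is not needed for this estimate but presumably is retained for the ensuing development.)

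\emph{Main obstacle.} The only delicate point is the choice of weight $\alpha$ in part (ii): one must pick $\alpha$ so that the product $m\cdot\ip{f}{\omega_x}$ is tamed regardless of how badly $m$ grows, and the choice $\alpha=(1+|m|)^{-1}$ does exactly this while remaining a genuinely positive measurable function as required by hyper-orthogonality. In part (i) the analogous role is played automatically by the hypothesis $m\in L^2_{loc}$, which is why only pseudo-orthogonality (with its uniform-boundedness clause) is needed there. In both cases the finite measure of $X_f$ is essential, as it converts the local integrability of $m$ (resp.\ the $L^1$ estimate via Cauchy--Schwarz) into genuine convergence and $\H$-boundedness.
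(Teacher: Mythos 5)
Your proof is correct. Part (i) is essentially the paper's own argument: restrict the integral to $X_f$, bound $|\ip{f}{\omega_x}|$ by its supremum $C_f$, and combine Cauchy--Schwarz with the bounded Bessel constant of $\theta$ to get the bound $C_f\,\|m\|_{L^2(X_f)}B_\theta^{1/2}\|g\|$. Part (ii), however, takes a genuinely different and cleaner route. The paper proceeds in three steps: first the case $m\in L^\infty(X,\mu)$, where Proposition \ref{Bessel_mult} gives $D(M_{m,\omega,\theta})=\D$ (this is precisely where the hypothesis that $\omega$ is a bounded Bessel map enters); then the case $|m(x)|\geq 1$ a.e., handled by hyper-orthogonality with the weight $\alpha=|m|^{-1}$; and finally the general case, obtained by splitting $m=m_1+m_2$ with $m_1\in L^\infty(X,\mu)$ and $|m_2|\geq 1$, and observing that $D(M_{m,\omega,\theta})\supseteq D(M_{m_1,\omega,\theta})\cap D(M_{m_2,\omega,\theta})$ still contains a total set. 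You collapse all of this into a single application of hyper-orthogonality with the weight $\alpha=(1+|m|)^{-1}$, which is measurable and strictly positive whatever $m$ is, and which tames the symbol in one stroke since $|m(x)|\alpha(x)\leq 1$; the same finite-measure Cauchy--Schwarz estimate then closes the argument with no case distinction and no sum-of-multipliers step. This buys two things: the proof is shorter, and it shows that the bounded Bessel hypothesis on $\omega$ is superfluous for the density claim in (ii) --- hyper-orthogonality of $\omega$ and the bounded Bessel property of $\theta$ alone suffice --- a point you correctly flag. What the paper's decomposition buys in exchange is the explicit splitting $M_{m,\omega,\theta}=M_{m_1,\omega,\theta}+M_{m_2,\omega,\theta}$ with one summand everywhere defined and bounded (Proposition \ref{Bessel_mult}(ii)), which can be convenient for later manipulations, but it is not needed for density.
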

\begin{proof}
	\begin{enumerate}
		\item[(i)] Let $V$ and $X_f$ be as in Definition \ref{def_typeAB}(1). For $f\in V, g\in \D$ we have by Cauchy-Schwarz inequality that
		$$
		\left |\int_X m(x)\ip{f}{\omega_x} \ip{\theta_x}{g}d\mu \right|\leq C	\int_{X_f} |m(x)||\!\ip{\theta_x}{g}\!|d\mu \leq CB_\theta^{\frac{1}{2}}\|m\|_{L^2(X_f)}  \|g\|,
		$$
		where $C=\sup_{x\in X_f}|\ip{f}{\omega_x}|$. Thus $f\in D(M_{m,\omega,\theta})$, and consequently $D(M_{m,\omega,\theta})$ is dense because $V$ is total.
		\item[(ii)] The proof is divided into three parts.  If $m\in L^\infty(X,\mu)$, then the conclusion follows by Proposition \ref{Bessel_mult} since  $D(M_{m,\omega,\theta})=\D$. If $|m(x)|\geq 1$ a.e., then we take $\alpha(x)=|m(x)|^{-1}$ a.e. in Definition \ref{def_typeAB}(ii). Thus there exists $V_\alpha\subset \D$  total in $\H$ such that for every $f\in V_\alpha$ there exists a compact subset $X_f\subset X$ and
		$$
		\left |\int_X m(x)\ip{f}{\omega_x} \ip{\theta_x}{g}d\mu \right|\leq	\int_{X_f} |m(x)||m(x)|^{-1}|\ip{\theta_x}{g}|d\mu \leq \mu(X_f)^{\frac{1}{2}}B_\theta^{\frac{1}{2}} \|g\|,
		$$
		for every $g\in \D$ (the last inequality is due to Cauchy-Schwarz inequality). This means that $D(M_{m,\omega,\theta})$ is dense. Finally, let $m$ be a generic measurable function. Then it is possible to write $m$ as sum of two measurable functions $m=m_1+m_2$ such that $m_1\in L^\infty(X)$ and $|m_2|\geq 1$. Then $M_{m_1,\omega,\theta}$ is well-defined on $\D$ and $M_{m_2,\omega,\theta}$ is defined on a  subspace of $\D$ dense in $\H$. As consequence, also $M_{m,\omega,\theta}=M_{m_1,\omega,\theta}+M_{m_2,\omega,\theta}$ is densely defined. \qedhere
	\end{enumerate}
\end{proof}

We show other examples of weakly measurable maps satisfying Definition \ref{def_typeAB}.

	\begin{example}
		For $x\in \mb R$ consider the function $\omega_x$ defined by $\omega_x(y):=e^{-2\pi i x y}$ for $y\in \mb R$. Then $\omega_x$ is a distribution on $L^1(\mb R)\cap L^2(\mb R)$ and in Example 3.17 of \cite{TTT} it was proved that $\omega$ is a distribution frame.
		Choosing $V=\{f\in L^2(\mb R): \widehat{f}\in C^\infty_0(\mb R)\}$, where $\widehat{f}$ denotes the Fourier transform of $f$, in Definition \ref{def_typeAB}, we conclude that $\omega$ is pseudo-orthogonal. \\
		Now let $\alpha:\mb R\to \mb C$ be a positive measurable function and define $V_\alpha=\{f\in L^2(\mb R): \widehat{f}\in C^\infty_0(\mb R) \text{ and } |\widehat f(x)| \leq \alpha(x), x\in \mb R\}$. By the considerations in Remark \ref{rem_hyper} and since the Fourier transform is unitary in $L^2(\mb R)$, the set $V_\alpha$ is  total, i.e. $\omega$ is also hyper-orthogonal.
	\end{example}

	\begin{example}
		Let $\D[t]$ be a dense subspace of $L^2(\mb R)$ endowed with a locally convex topology $t$, stronger than the topology of $L^2(\mb R)$, and such that $C_0^\infty(\mb R)\subset \D$. \\
		Let $g\in L^2(\mb R)$ have support in a bounded interval $I$.  Define $\omega_x(t)=g(t-x)$ for $t\in \mb R$. Then the weakly measurable map $\omega:\mb R \to L^2(\mb R)$ is pseudo-orthogonal (again one can take $V=C_0^\infty(\mb R)$). 	
	\end{example}

	We conclude this section by turning the attention to a sufficient condition for a distribution multiplier to be closable.
	
	\begin{prop}
		Let $\omega,\theta:X\to \D^\times$ be weakly measurable	functions and $m:X\to \mb C$ be a $\mu$-measurable function. If $M_{\overline{m},\theta,\omega}$ is densely defined (in particular if $\omega,\theta$  are bounded Bessel distribution maps and $\theta$ is hyper-orthogonal), then $M_{m,\omega,\theta}$ is closable.
	\end{prop}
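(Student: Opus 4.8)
The plan is to exploit the formal adjoint relationship between $M_{m,\omega,\theta}$ and $M_{\overline{m},\theta,\omega}$: I expect that for every $f\in D(M_{m,\omega,\theta})$ and every $g\in D(M_{\overline{m},\theta,\omega})$ one has the identity
$$
\ip{M_{m,\omega,\theta}f}{g}=\ip{f}{M_{\overline{m},\theta,\omega}g}.
$$
Once this is available, closability follows from the density of $D(M_{\overline{m},\theta,\omega})$ by the usual graph argument, and crucially one never needs $M_{m,\omega,\theta}$ itself to be densely defined.

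First I would verify the identity by direct conjugation of the defining integrals. By definition $\ip{M_{m,\omega,\theta}f}{g}=\int_X m(x)\ip{f}{\omega_x}\ip{\theta_x}{g}\,d\mu$ for $f\in D(M_{m,\omega,\theta})\subset \D$ and $g\in \D$, while $\ip{M_{\overline{m},\theta,\omega}g}{f}=\int_X \overline{m(x)}\ip{g}{\theta_x}\ip{\omega_x}{f}\,d\mu$ for $g\in D(M_{\overline{m},\theta,\omega})\subset \D$ and $f\in \D$. Taking the complex conjugate of the latter and using $\overline{\ip{g}{\theta_x}}=\ip{\theta_x}{g}$ and $\overline{\ip{\omega_x}{f}}=\ip{f}{\omega_x}$ turns the integrand into $m(x)\ip{f}{\omega_x}\ip{\theta_x}{g}$, which is exactly the integrand of the first expression. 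Since both $f$ and $g$ lie in $\D$ and belong to the respective domains, both integrals converge absolutely, so passing the conjugation inside is legitimate and the displayed identity holds.

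With the identity in hand, I would prove closability via the sequence characterization. Suppose $\{f_n\}\subset D(M_{m,\omega,\theta})$ with $f_n\to 0$ in $\H$ and $M_{m,\omega,\theta}f_n\to h$ in $\H$; the goal is $h=0$. For any fixed $g\in D(M_{\overline{m},\theta,\omega})$ the identity gives $\ip{M_{m,\omega,\theta}f_n}{g}=\ip{f_n}{M_{\overline{m},\theta,\omega}g}$, and letting $n\to\infty$ the right-hand side tends to $0$ (because $f_n\to 0$ while $M_{\overline{m},\theta,\omega}g$ is fixed) whereas the left-hand side tends to $\ip{h}{g}$. Hence $\ip{h}{g}=0$ for every $g\in D(M_{\overline{m},\theta,\omega})$, a set dense in $\H$ by hypothesis, which forces $h=0$. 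The parenthetical ``in particular'' clause is then immediate from Theorem \ref{th_hyper_pseudo}(ii), which guarantees that $M_{\overline{m},\theta,\omega}$ is densely defined when $\omega,\theta$ are bounded Bessel maps and $\theta$ is hyper-orthogonal.

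The argument is essentially routine once the form identity is set up; the only point requiring care is the bookkeeping of the complex conjugations, so that $M_{\overline{m},\theta,\omega}$ (and not some other reshuffling of the three symbols) appears on the right, together with the observation that the domain constraints make every integral absolutely convergent. I do not anticipate any genuine analytic obstacle, since the dense domain of $M_{\overline{m},\theta,\omega}$ serves merely as a supply of test vectors and no regularity or boundedness of $m$ is invoked.
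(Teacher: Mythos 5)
Your proof is correct and follows essentially the same route as the paper: both rest on the identity $\ip{M_{m,\omega,\theta}f}{g}=\ip{f}{M_{\overline{m},\theta,\omega}g}$ obtained by conjugating the defining integrals, and both conclude closability from the density of $D(M_{\overline{m},\theta,\omega})$. The only cosmetic difference is that the paper phrases the conclusion as the inclusion $M_{m,\omega,\theta}\subseteq (M_{\overline{m},\theta,\omega})^*$ (an adjoint being automatically closed), whereas you unfold that same fact into the standard sequence criterion $f_n\to 0$, $M_{m,\omega,\theta}f_n\to h \Rightarrow h=0$.
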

	\begin{proof}
		For $f\in D(M_{m,\omega,\theta})$ and $g\in D(M_{\overline{m},\theta,\omega})$ we have $\ip{M_{m,\omega,\theta} f}{g}=\ip{f}{M_{\overline{m},\theta,\omega} g}$. This means that  $M_{m,\omega,\theta}\subseteq {(M_{\overline{m},\theta,\omega})}^*$, i.e. $M_{m,\omega,\theta}$ is closable.	
	\end{proof}

\section{Riesz distribution multipliers}
\label{sect_5}
In this section, we examine the case where $\omega$ and $\theta$ are Riesz distribution bases,  reconsidering the Examples 4.1 and 4.2 of \cite{TTT}.
Let $\omega$ and $\theta$ be distribution Riesz bases, ${\check\omega}$ and ${\check\theta}$ their extensions to $\H$ with the limit procedure described just after Definition \ref{bbounded},  and  a $\mu$-measurable function $m:X\to \mb C$ such that the integral:
$$
\int_X m(x)\ip{f}{\check\omega_x}\ip{\check\theta_x}{g}d\mu
$$
is convergent for all $f,g\in\H$. Since $\theta$ is a Riesz basis, the analysis operator $T_\theta^* $ is a topological isomorphism of $\H$ onto $L^2(X,\mu)$, then $T_\theta^*(\H)=L^2(X,\mu)$. It follows that $m(x)\ip{f}{\check\omega_x}\in L^2(X,\mu)$ for all $f\in \H$. Furthermore, the operator $M_{m, \omega,\theta}:\D\rightarrow\H$:
$$
M_{{m}, \omega,\theta,} f=\int_X {m(x)}\ip{f}{\omega_x}\theta_x d\mu,\qquad \forall f\in\D,
$$
is well-defined. 
Analogously, for all $g\in\H$ one has $m(x)\ip{g
}{\check\theta_x}\in L^2(X,\mu)$ and the operator $M_{m, \omega,\theta}^\dag:\D\rightarrow \H$:
$$
M_{m, \omega,\theta}^\dag g:=M_{\overline{m},\theta, \omega} g=\int_X \overline{m(x)}\ip{g}{\theta_x}\omega_x d\mu\quad \forall g\in\D,
$$
is well-defined. One has: 
$$
\ip{M_{m, \omega,\theta}f}{g}=\ip{f}{M_{m, \omega,\theta}^\dag g},\quad \forall f,g\in\D.
$$
Then $M_{m, \omega,\theta}$ is a closable operator in $\H$. It is not difficult to show that the domain of the closure $D(\overline{M}_{m, \omega,\theta})$ is 
$\{ f\in\H: \int_X |m(x)\ip{f}{\check\omega_x}|^2d\mu<\infty\}$. In general, the operators $M_{m, \omega,\theta}$ are unbounded, so their product is not always defined. However, if they belong to the space $\LD$, they can be multiplied. In the following example, some cases of unbounded multipliers in $\LD$ are considered.
\begin{example}\label{ld}
	Let us consider the rigged Hilbert space $\SC \subset L^2(\mb R) \subset \SC^\times $.
	We write $\widehat f$ and $\check{f}$, respectively, for the Fourier transform and inverse Fourier transform of $f\in \SC$.	
	
	Define:		
	$\omega_x=\delta_x$ and $\theta_x(y):=e^{-2\pi i x y}$ for $y\in \mb R$, thus $\delta_x,\theta_x \in \SC^\times$. Let 
	$\mathcal O_M(\mb R)$ be the space of $C^\infty$-functions which, together with their derivatives, are polynomially bounded (see \cite{horvath}). If $m\in \mathcal O_M(\mb R)$,
	for $f\in \SC$ we have:
	$$M_{m,\omega,\omega}f=mf,\; M_{m,\omega,\theta}f=\check{m}\ast \check{f},\; M_{m,\theta,\omega}f=m\widehat f \;\text{ and }\; M_{m,\theta,\theta}f=\check{m}\ast f.
	$$
	
\end{example}
The above considerations lead in particular to the case of a Riesz basis $\omega$ and its dual $\theta$ of Example 4.2 \cite{TTT}. To simplify the notation, we denote the multiplier as $M_{m}:=M_{m,\theta,\omega}$. In Example 4.2 \cite{TTT}, it is shown that $m(x)$ is a generalized eigenvalue of $M_{m}$, i.e.:
$$\ip{(M^\dagger_{m})^\times\theta_x}{g}=m(x)\ip{\theta_x}{g}, \forall g\in\D,\quad \mbox{$\mu$-a.e.}\quad x\in X,$$
and ${\overline m}(x)$ is a generalized eigenvalue of $M^\dagger_{m}$ i.e.: 
$$\ip{(M_{m})^\times\omega_x}{g}=\overline{m}(x)\ip{\omega_x}{g} \forall g\in\D,\quad \mbox{$\mu$-a.e.}\quad x\in X.$$
A consequence is the following:
\begin{prop}\label{mult}
	Let  $M_{m_1}$ and $M_{m_2}$ be multipliers of a Riesz basis $\omega$ and its dual $\theta$, such that $M_{m_i}\in \LD $, $i=1,2$. Then $M_{m_1} M_{m_2}= M_{m_1m_2}$.
\end{prop}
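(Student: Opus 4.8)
The plan is to prove the operator identity on the common domain $\D$ by comparing the associated sesquilinear forms. Since $M_{m_1},M_{m_2}\in\LD$, both maps leave $\D$ invariant, so $M_{m_2}f\in\D$ for every $f\in\D$ and the composition $M_{m_1}M_{m_2}$ is a well-defined element of $\LD$ with domain $\D$. I may therefore insert $M_{m_2}f\in\D$ into the defining integral of $M_{m_1}=M_{m_1,\theta,\omega}$, obtaining for all $f,g\in\D$
\[
\ip{M_{m_1}M_{m_2}f}{g}=\int_X m_1(x)\,\ip{M_{m_2}f}{\theta_x}\,\ip{\omega_x}{g}\,d\mu .
\]
Everything then hinges on rewriting the middle factor $\ip{M_{m_2}f}{\theta_x}$.

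First I would establish the pointwise identity
\[
\ip{M_{m_2}f}{\theta_x}=m_2(x)\,\ip{f}{\theta_x}\qquad\text{$\mu$-a.e. }x\in X,\ f\in\D,
\]
which is precisely the generalized eigenvalue property recalled just above the statement, and which I would justify through the uniqueness of the coefficients of a Riesz expansion. On the one hand, the definition of $M_{m_2}=M_{m_2,\theta,\omega}$ reads $M_{m_2}f=\int_X m_2(x)\ip{f}{\theta_x}\,\omega_x\,d\mu=T_\omega\big(m_2(\cdot)\ip{f}{\theta_\cdot}\big)$, where the coefficient $m_2(\cdot)\ip{f}{\theta_\cdot}$ belongs to $L^2(X,\mu)$ by the reasoning at the beginning of this section. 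On the other hand, since $\theta$ is the dual basis of $\omega$, the reconstruction $h=\int_X\ip{h}{\theta_x}\omega_x\,d\mu=T_\omega\big(\ip{h}{\theta_\cdot}\big)$ holds for every $h\in\D$; applied to $h=M_{m_2}f$ it exhibits $\ip{M_{m_2}f}{\theta_\cdot}$ as a second $L^2$-coefficient of the same vector. As $\omega$ is a Riesz basis, $T_\omega$ is a topological isomorphism of $L^2(X,\mu)$ onto $\H$ by Proposition \ref{prop_rieszbasis}(c), hence injective, and the two coefficient functions agree $\mu$-a.e.

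Substituting this identity into the integrand (which is legitimate, since I only replace one integrand by a $\mu$-a.e.\ equal one), I obtain for all $f,g\in\D$
\[
\ip{M_{m_1}M_{m_2}f}{g}=\int_X m_1(x)m_2(x)\,\ip{f}{\theta_x}\,\ip{\omega_x}{g}\,d\mu .
\]
The right-hand side is a bounded conjugate-linear functional of $g$, being equal to $\ip{M_{m_1}M_{m_2}f}{g}$ with $M_{m_1}M_{m_2}f\in\H$; hence $f\in D(M_{m_1m_2})$ and the integral equals $\ip{M_{m_1m_2}f}{g}$. Letting $g$ run through the dense subspace $\D$ then yields $M_{m_1}M_{m_2}f=M_{m_1m_2}f$ for every $f\in\D$, which is the claimed identity. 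I expect the main obstacle to be the a.e.\ identity of the second paragraph: it relies simultaneously on the membership $m_2(\cdot)\ip{f}{\theta_\cdot}\in L^2(X,\mu)$ and on the injectivity of $T_\omega$, that is, on the $\mu$-independence of the Riesz basis $\omega$; once these are in place, the remaining manipulations are routine and require no Fubini argument.
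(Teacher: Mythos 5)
Your proof is correct, and its skeleton is the same as the paper's: both arguments reduce the claim to the generalized eigenvalue property of dual Riesz bases, which is exactly the identity you isolate, $\ip{M_{m_2}f}{\theta_x}=m_2(x)\ip{f}{\theta_x}$ for $\mu$-a.e.\ $x$. There are two genuine differences in execution. First, the paper transfers $M_{m_2}$ onto the other argument through the involution, writing $\ip{(M_{m_2}M_{m_1})f}{g}=\ip{M_{m_1}f}{M_{m_2}^\dagger g}$ and applying the eigenvalue property to $M_{m_2}^\dagger$ acting on $g$; you instead use the invariance of $\D$ under $M_{m_2}$ and apply the property directly to $M_{m_2}f$, so no adjoint is needed and you get $M_{m_1}M_{m_2}=M_{m_1m_2}$ in the stated order rather than with the factors reversed. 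Second, and more substantively, the paper simply cites the eigenvalue property from Example 4.2 of \cite{TTT}, whereas you re-derive it: the two $L^2$ coefficient representations of $M_{m_2}f$ under $T_\omega$ (one from the definition of the multiplier, one from the duality of $\omega$ and $\theta$ applied to the vector $M_{m_2}f\in\D$) must agree $\mu$-a.e.\ because $T_\omega$ is injective on $L^2(X,\mu)$, i.e.\ because a Riesz distribution basis is $\mu$-independent. This makes your argument self-contained at the cost of a few extra lines, and it correctly identifies where the standing hypotheses of Section \ref{sect_5} enter (namely, that $m_2(\cdot)\ip{f}{\theta_\cdot}\in L^2(X,\mu)$); it is also the right substitute for pointwise biorthogonality, which, as the paper stresses, is unavailable in the distribution setting. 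I see no gaps.
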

\begin{proof}
	
	For all $f,g\in \D$
	$$
	\ip{(M_{m_2} M_{m_1})f}{g}=\ip{ M_{m_1}f}{M^\dagger_{m_2}g}=\int_X m_1(x)\ip{f}{\omega_x}\ip{\theta_x}{M^\dagger_{m_2}g}d\mu=
	$$
	$$
	=\int_X m_1(x)m_2(x)\ip{f}{\omega_x}\ip{\theta_x}{g}d\mu=\ip{M_{m_1m_2}f}{g}
	$$
	and the proof is completed.
\end{proof}
Analogously,  $M^\dag_{m_1}$ and $M^\dag_{m_2}$ can be multiplied, and $(M_{m_1}M_{m_2} )^\dag=M^\dag_{m_2} M^\dag_{m_1}$. This shows that the multipliers in $\LD$ of dual Riesz bases is a $\dag$-subalgebra in $\LD$. 
Furthermore, if both $M_m^{-1}$ and $M_{\frac{1}{m}}$ are defined in $\LD$, by Proposition \ref{mult} one has that $M_m^{-1}=M_{\frac{1}{m}}$. 
The considered case of multipliers in $\LD$  allows to handle easily the symbolic calculus, but operators in $\LD$ are, in general, unbounded. What can be said about the case of bounded operators? Obviously, if $m\in L^\infty(X,\mu)$ and $\omega$ and $\theta$ are Riesz bases, $M_m$ is bounded.  Vice versa, the following proposition holds:
\begin{prop}
	Let $\omega$ be a distribution Riesz basis with dual $\theta$. If the multiplier $M_{m,\omega,\theta}$ is bounded then $m\in L^\infty(X,\mu)$.
\end{prop}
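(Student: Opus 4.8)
The plan is to reduce the statement to the boundedness of the multiplication operator $D_m$ on $L^2(X,\mu)$ and then invoke the standard fact that, over a $\sigma$-finite measure space, $D_m$ is bounded on $L^2(X,\mu)$ if and only if $m\in L^\infty(X,\mu)$ (with $\|D_m\|=\|m\|_\infty$). To carry this out I would exploit that $\omega$ and its dual $\theta$ are both Riesz bases: by Proposition \ref{prop_rieszbasis}(c) the synthesis operators $T_\omega$ and $T_\theta$ are topological isomorphisms of $L^2(X,\mu)$ onto $\H$, hence their Hilbert adjoints, the analysis operators $T_\omega^*$ and $T_\theta^*$, are topological isomorphisms of $\H$ onto $L^2(X,\mu)$. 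In particular $T_\omega^*$ is surjective and $T_\theta$ is bounded below, say $\|T_\theta\xi\|\geq c\,\|\xi\|_2$ for all $\xi\in L^2(X,\mu)$ and some $c>0$.

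First I would record the factorization of the multiplier. For $f,g\in\D$ one has, directly from the definition,
$$
\ip{M_{m,\omega,\theta}f}{g}=\int_X m(x)\ip{f}{\omega_x}\ip{\theta_x}{g}\,d\mu=\langle m\,T_\omega^*f,\,T_\theta^*g\rangle_{L^2(X,\mu)},
$$
so that, writing $\widehat{M}_{m,\omega,\theta}$ for the bounded extension of $M_{m,\omega,\theta}$, the multiplier acts as $T_\theta D_m T_\omega^*$ on the subspace where $m\,T_\omega^*f\in L^2(X,\mu)$ (compare Proposition \ref{Bessel_mult}(iii)). The conceptual core of the argument is then to ``solve'' for $D_m=T_\theta^{-1}\widehat{M}_{m,\omega,\theta}(T_\omega^*)^{-1}$, which exhibits $D_m$ as a composition of bounded operators.

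To avoid the integrability subtleties hidden in that last step, I would argue by contradiction through a truncation of the symbol. Suppose $m\notin L^\infty(X,\mu)$; then there is an increasing sequence $n_k\to\infty$ for which the annulus $A_k:=\{x\in X:n_k<|m(x)|\leq n_k+1\}$ has positive measure, and by $\sigma$-finiteness we may pick $B_k\subseteq A_k$ with $0<\mu(B_k)<\infty$. Setting $h_k:=\mu(B_k)^{-1/2}\chi_{B_k}$ gives a unit vector of $L^2(X,\mu)$ with $m h_k\in L^2(X,\mu)$ and $\|m h_k\|_2\geq n_k$. By surjectivity of $T_\omega^*$ choose $f_k\in\H$ with $T_\omega^*f_k=h_k$; then $\|f_k\|\leq\|(T_\omega^*)^{-1}\|$ is bounded uniformly in $k$. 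Since $m h_k\in L^2(X,\mu)$, the factorization yields $\widehat{M}_{m,\omega,\theta}f_k=T_\theta(m h_k)$, whence
$$
\|\widehat{M}_{m,\omega,\theta}f_k\|=\|T_\theta(m h_k)\|\geq c\,\|m h_k\|_2\geq c\,n_k\longrightarrow\infty,
$$
contradicting the bound $\|\widehat{M}_{m,\omega,\theta}f_k\|\leq\|\widehat{M}_{m,\omega,\theta}\|\,\|f_k\|$. Hence $m\in L^\infty(X,\mu)$.

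The main obstacle will be precisely the identity $\widehat{M}_{m,\omega,\theta}f_k=T_\theta(m h_k)$ for $f_k\in\H$ rather than $f_k\in\D$: the weak formula above is given only on $\D$, and extending it requires knowing that $m\,T_\omega^*f_k=m h_k$ lies in $L^2(X,\mu)$, so that all integrals converge absolutely and the pairing against test vectors $g\in\D$ (through $\langle m h_k,\,T_\theta^*g\rangle_{L^2(X,\mu)}=\ip{T_\theta(m h_k)}{g}$, using that $T_\theta^*$ is the Hilbert adjoint of $T_\theta$) is legitimate. This is exactly why I truncate $m$ on the annulus $A_k$, where $|m|\leq n_k+1$: choosing $B_k$ of finite measure on which $m$ is bounded makes $m h_k\in L^2(X,\mu)$ automatic and renders the extension of the weak identity routine, so that the quotient representation of $D_m$ can be applied safely.
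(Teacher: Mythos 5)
Your route is genuinely different from the paper's: you reduce the statement to the boundedness of $D_m$ on $L^2(X,\mu)$ via the isomorphisms $T_\omega^*$ and $T_\theta$ and a truncation of the symbol, whereas the paper deduces it from the same statement for the diagonal operator $A_{m,\zeta}$ of a Gel'fand basis, using the similarity $\overline{M}_{m,\omega,\theta}=W\overline{A}_{m,\zeta}W^{-1}$ of Examples 4.1--4.2 of \cite{TTT}. However, as written your proof has a genuine gap, located exactly at the step you flagged, and the reason you offer for it being ``routine'' does not close it. The identity $\widehat{M}_{m,\omega,\theta}f_k=T_\theta(mh_k)$ cannot be extracted from the mere membership $mh_k\in L^2(X,\mu)$. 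Since in general $f_k\notin\D$, the only access to $\widehat{M}_{m,\omega,\theta}f_k$ is as $\lim_n M_{m,\omega,\theta}f_n$ along a sequence $f_n\in\D$ with $f_n\to f_k$ in $\H$, and the weak formula gives $\ip{M_{m,\omega,\theta}f_n}{g}=\langle m\,T_\omega^\times f_n,T_\theta^\times g\rangle_{L^2}$ for $g\in\D$. To pass to the limit on the right-hand side one needs $m\,T_\omega^\times f_n\to mh_k$ at least weakly in $L^2(X,\mu)$, and that is precisely the kind of assertion that fails for an unbounded symbol: the approximants $T_\omega^\times f_n$ are not supported in $B_k$, so they feel the unbounded part of $m$, and the convergence $T_\omega^\times f_n\to h_k$ in $L^2(X,\mu)$ gives no control whatsoever on $m\,T_\omega^\times f_n$. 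Your truncation tames the limit function, not the approximating sequence; absolute convergence of the integral defining $\langle mh_k,T_\theta^\times g\rangle_{L^2}$ was never the obstacle.

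The gap is repairable under the standing assumptions made at the beginning of Section \ref{sect_5}, which this proposition (and the paper's own proof) implicitly carries: there it is observed that $m(x)\ip{g}{\check\theta_x}\in L^2(X,\mu)$ for every $g\in\H$, hence $\overline{m}\,T_\theta^\times g\in L^2(X,\mu)$ for every $g\in\D$. Then
\[
\ip{M_{m,\omega,\theta}f_n}{g}=\langle T_\omega^\times f_n,\overline{m}\,T_\theta^\times g\rangle_{L^2}\longrightarrow\langle h_k,\overline{m}\,T_\theta^\times g\rangle_{L^2}=\langle mh_k,T_\theta^\times g\rangle_{L^2}=\ip{T_\theta(mh_k)}{g},
\]
where the second-to-last equality is legitimate because $mh_k$ and $T_\theta^\times g$ both lie in $L^2(X,\mu)$; density of $\D$ in $\H$ then yields $\widehat{M}_{m,\omega,\theta}f_k=T_\theta(mh_k)$. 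Alternatively, avoid test vectors altogether: $m\,T_\omega^\times f_n=T_\theta^{-1}(M_{m,\omega,\theta}f_n)$ converges in $L^2(X,\mu)$ because $T_\theta^{-1}$ is bounded, and an a.e.-convergent subsequence identifies the limit as $mh_k$. With either supplement your contradiction argument is complete and gives a self-contained alternative to the paper's similarity argument, with the additional merit of not invoking the cited examples of \cite{TTT}. Note, finally, that the same standing assumptions already give $m\,T_\omega^*f\in L^2(X,\mu)$ for \emph{every} $f\in\H$, i.e.\ the factorization $\widehat{M}_{m,\omega,\theta}=T_\theta D_mT_\omega^*$ on all of $\H$; once that is in hand your truncation becomes unnecessary, since $D_m=T_\theta^{-1}\widehat{M}_{m,\omega,\theta}(T_\omega^*)^{-1}$ is everywhere defined and bounded, and $m\in L^\infty(X,\mu)$ follows from the standard fact you quoted.
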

\begin{proof}
	The proposition is true for the diagonal operator $A_{m,\zeta}$, i.e. a multiplier of the Gel'fand basis: $\omega=\theta=\zeta$ (see Example 4.1 of \cite{TTT}). The same holds for  $M_{m,\omega,\theta}$: in fact $M_{m,\omega,\theta}$ and $A_{m,\zeta}$ are similar  via $W$ of b) of Proposition \ref{prop_rieszbasis}, i.e. ${\overline M}_{m,\omega,\theta}=W\overline{A}_{m,\zeta} W^{-1}$ (see  Example 4.2 of \cite{TTT}), where ${\overline M}_{m,\omega,\theta}$ and $\overline{A}_{m,\zeta}$ are their closure.
\end{proof}

For conditions of invertibility of a multiplier, we can state  the following  in a more general form. 

\begin{prop}
	\label{invertibility}
	Let  $\omega,\theta:X\to \D^\times$  be weakly measurable maps and $m(x)\neq0$  $\mu$-a.e. in $X$. Then
	\begin{enumerate}
		\item[(i)] If $\omega$ is $\mu$-independent and $\theta$ is total, then ${M}_{m,\omega,\theta}$ is injective.
		\item[(ii)] If  $\omega$ is total and $\theta$ is $\mu$-independent, then ${M}_{m,\omega,\theta}$ has dense range in $\H$.
	\end{enumerate}		
\end{prop}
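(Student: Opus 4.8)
The plan is to read both statements directly off the defining identity
\[
\ip{M_{m,\omega,\theta}f}{g}=\int_X m(x)\ip{f}{\omega_x}\ip{\theta_x}{g}\,d\mu,\qquad g\in\D,
\]
and to regard the two parts as dual to one another through the relation $M_{m,\omega,\theta}^\dagger=M_{\overline m,\theta,\omega}$ recorded earlier: density of the range of $M_{m,\omega,\theta}$ amounts, after passing to the adjoint, to injectivity of the multiplier in which $\omega$ and $\theta$ are interchanged and $m$ is conjugated. Since $m(x)\neq0$ $\mu$-a.e., multiplication by $m$ neither creates nor destroys vanishing, so the only tools I need are the two notions of Definition \ref{tandg}, the $\mu$-independence of $\omega$ and the totality of $\theta$ (and the reverse pair in the second part).

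For part (i) I would take $f\in D(M_{m,\omega,\theta})$ with $M_{m,\omega,\theta}f=0$. Writing $\xi_f(x):=m(x)\ip{f}{\omega_x}$, the identity above becomes
\[
\int_X \xi_f(x)\,\ip{\theta_x}{g}\,d\mu=0,\qquad\forall g\in\D,
\]
that is, the (weakly defined) conjugate--linear functional $\int_X\xi_f(x)\theta_x\,d\mu$ vanishes on $\D$. The vanishing of this form for every $g$, together with the $\mu$-independence of $\omega$ and the totality of $\theta$, then forces in turn $\xi_f=0$ $\mu$-a.e.\ and $f=0$: indeed $m(x)\neq0$ $\mu$-a.e.\ turns $\xi_f=0$ into $\ip{f}{\omega_x}=0$ $\mu$-a.e., from which $f=0$ follows. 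Hence $M_{m,\omega,\theta}$ is injective.

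For part (ii) I would dualise. Density of $R(M_{m,\omega,\theta})$ in $\H$ is equivalent to the non-existence of a nonzero $h\in\H$ with $\ip{M_{m,\omega,\theta}f}{h}=0$ for all $f\in D(M_{m,\omega,\theta})$; for such an $h$ the relation $M_{m,\omega,\theta}^\dagger=M_{\overline m,\theta,\omega}$ converts these orthogonality conditions into $M_{\overline m,\theta,\omega}h=0$. Now $M_{\overline m,\theta,\omega}$ is exactly the multiplier built from the interchanged pair, and under the hypotheses of (ii) its data --- $\theta$ $\mu$-independent and $\omega$ total --- are precisely those required to run the argument of part (i). That argument then gives $h=0$, so $R(M_{m,\omega,\theta})$ is dense in $\H$.

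The step I expect to be the main obstacle is carrying out these two extractions rigorously at the stated generality, where $\omega$ and $\theta$ are merely weakly measurable and need not be Bessel. In part (i) I must check that $\xi_f$ is an admissible coefficient function and that the weak vanishing of $\int_X\xi_f\theta_x\,d\mu$ really is the hypothesis on which the two properties act; in part (ii) the delicate point is the identification of $R(M_{m,\omega,\theta})^{\perp}$ with the kernel of $M_{\overline m,\theta,\omega}$, i.e.\ the passage from $\ip{M_{m,\omega,\theta}f}{h}=0$ for all $f$ to $M_{\overline m,\theta,\omega}h=0$ for an arbitrary $h\in\H$, which requires handling the domains and the adjoint relation with care rather than purely formally.
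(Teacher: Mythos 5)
Your part (i) founders on a crossed application of Definition \ref{tandg}. From $M_{m,\omega,\theta}f=0$ you correctly arrive at $\int_X \xi_f(x)\ip{\theta_x}{g}\,d\mu=0$ for all $g\in\D$, with $\xi_f(x)=m(x)\ip{f}{\omega_x}$; but the map paired against the test vectors in this identity is $\theta$, so it is the $\mu$-independence of $\theta$ --- not of $\omega$, as you assert --- that forces $\xi_f=0$ $\mu$-a.e., and once $\ip{f}{\omega_x}=0$ $\mu$-a.e.\ it is the totality of $\omega$ --- not of $\theta$ --- that yields $f=0$. Thus the extraction you run proves injectivity from ``$\theta$ $\mu$-independent and $\omega$ total'', which are the hypotheses printed under item (ii), and this is exactly what the paper's own proof does (``Since $\theta$ is $\mu$-independent\dots but $\omega$ is total''); symmetrically, the hypotheses printed under (i) are the ones the dense-range argument consumes. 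The proposition as printed pairs its hypotheses with the wrong conclusions, and its proof silently uses the correct pairing. Your proposal, instead of detecting this, forces the printed hypotheses into the argument by attributing to $\omega$'s $\mu$-independence a conclusion that only $\theta$'s $\mu$-independence can deliver, so as written neither item is actually proved.

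Your route for part (ii) is also not available at this level of generality, as you yourself suspect in your closing paragraph, and the hole you flag there is real and unclosed. The operator $M_{\overline m,\theta,\omega}$ is defined only on a subspace of $\D$, so ``$M_{\overline m,\theta,\omega}h=0$'' is meaningless for a generic $h\in\H$ orthogonal to the range; moreover, under mere weak measurability $M_{m,\omega,\theta}$ need not be densely defined, so $M_{m,\omega,\theta}^*$ need not exist, and the relation $M^\dagger_{m,\omega,\theta}=M_{\overline m,\theta,\omega}$ you invoke is established in the paper only for the outer multipliers in $\LDD$ under Bessel hypotheses with $m\in L^\infty(X,\mu)$. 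The paper avoids the adjoint altogether: it tests $\ip{M_{m,\omega,\theta}f}{g}=0$ directly against $g\in\D$ and for all $f\in\D$, so that the coefficient function becomes $m(x)\ip{\theta_x}{g}$ integrated against $\ip{f}{\omega_x}$; there the $\mu$-independence of $\omega$ legitimately gives $m(x)\ip{\theta_x}{g}=0$ $\mu$-a.e., and totality of $\theta$ gives $g=0$. That direct test is the repair for your (ii); the duality reduction, besides resting on your flawed (i), would additionally need Bessel-type extendability of the pairing that the hypotheses do not provide.
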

\begin{proof}
	Assume that ${M}_{m,\omega,\theta} f=\int_X m(x)\ip{f}{\omega_x}{\theta_x}d\mu=0$. Since $\theta$ is $\mu$-independent, we have $m(x)\ip{f}{\omega_x}=0$ a.e., that is $\ip{f}{\omega_x}=0$ a.e.; but ${\omega}$ is total, then $f=0$.
	For the range, let $g\in\D$ such that $\ip{{M}_{m,\omega,\theta}f}{g}=0$ for all $f\in\D$, that is  $\ip{{M}_{m,\omega,\theta} f}{g}=\int_X m(x)\ip{f}{\omega_x}\ip{\theta_x}{g}d\mu=0$. Since $\omega$ is $\mu$-independent, we have $m(x)\ip{\theta_x}{g}=0$ a.e. and $g=0$, because $\theta$ is total.
\end{proof}
In particular, if $\omega$, $\theta$ are Riesz bases and $m(x)$ is nonzero a.e., then $M_{m,\omega,\theta}$ is invertible with densely defined inverse. To have a bounded inverse we can make use of an additional assumption. 

 \begin{prop}
	Let $M_{m,\omega,\theta}$ be a Riesz  distribution  multiplier. If  there exists $C>0$ such that $0<C\leq |m(x)|$ for all $x\in X$, the inverse of $M_{m,\omega,\theta}$ is bounded.
\end{prop}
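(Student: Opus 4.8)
The plan is to prove the stronger statement that $M_{m,\omega,\theta}$ is bounded below, which is exactly the assertion that its inverse is bounded. First I would invoke Proposition \ref{invertibility}: a Riesz distribution basis is simultaneously total and $\mu$-independent, so both hypotheses of that proposition hold for $\omega$ and $\theta$, whence $M_{m,\omega,\theta}$ is injective with dense range and $M_{m,\omega,\theta}^{-1}$ is a well-defined operator on $R(M_{m,\omega,\theta})$. It then suffices to exhibit a constant $k>0$ with $\|M_{m,\omega,\theta}f\|\geq k\|f\|$ for every $f$ in the domain, since this forces $\|M_{m,\omega,\theta}^{-1}h\|\leq k^{-1}\|h\|$ on the (dense) range.

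The key tool is the factorization recorded for bounded Bessel maps in Proposition \ref{Bessel_mult}(iii) and in the preamble of this section, namely $M_{m,\omega,\theta}f=T_\theta D_m T_\omega^* f$, which holds here because for $f$ in the domain the function $m\,(T_\omega^*f)(\cdot)=m(x)\ip{f}{\check\omega_x}$ lies in $L^2(X,\mu)$ (this is precisely what membership in the domain encodes for Riesz bases, as noted at the beginning of the section). Since $\omega$ and $\theta$ are Riesz distribution bases, Proposition \ref{prop_rieszbasis}(c) tells us that $T_\theta:L^2(X,\mu)\to\H$ and $T_\omega^*:\H\to L^2(X,\mu)$ are topological isomorphisms; writing $A_\omega,A_\theta$ for the lower frame bounds of $\omega,\theta$, this gives the two lower estimates $\|T_\omega^*f\|_2\geq A_\omega^{1/2}\|f\|$ for $f\in\H$ and $\|T_\theta\xi\|\geq A_\theta^{1/2}\|\xi\|_2$ for $\xi\in L^2(X,\mu)$.

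It remains to chain the estimates through $D_m$. The hypothesis $|m(x)|\geq C>0$ pointwise yields, for any $\eta\in L^2(X,\mu)$, the lower bound $\|D_m\eta\|_2^2=\int_X|m(x)|^2|\eta(x)|^2\,d\mu\geq C^2\|\eta\|_2^2$. Taking $\eta=T_\omega^*f$ and combining the three inequalities, I obtain
$$
\|M_{m,\omega,\theta}f\|=\|T_\theta D_m T_\omega^*f\|\geq A_\theta^{1/2}\|D_m T_\omega^*f\|_2\geq A_\theta^{1/2}C\|T_\omega^*f\|_2\geq C\sqrt{A_\omega A_\theta}\,\|f\|,
$$
so one may take $k=C\sqrt{A_\omega A_\theta}$ and conclude $\|M_{m,\omega,\theta}^{-1}\|\leq (C\sqrt{A_\omega A_\theta})^{-1}$.

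I do not expect a serious obstacle in the inequality chain, which is purely mechanical. The only point that needs care is domain bookkeeping: one must check that the factorization $M_{m,\omega,\theta}=T_\theta D_m T_\omega^*$ and the displayed estimate are valid on the whole domain $D(M_{m,\omega,\theta})\subseteq\D$ (where $T_\omega^*f$ coincides with $T_\omega^\times f$), and observe that the lower bound passes verbatim to the closure $\overline{M}_{m,\omega,\theta}$, whose domain is $\{f\in\H:\int_X|m(x)\ip{f}{\check\omega_x}|^2\,d\mu<\infty\}$; on that larger domain the same computation shows the closure is bounded below by the identical constant, so its inverse is bounded as well.
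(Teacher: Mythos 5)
Your proof is correct and takes essentially the same route as the paper's: existence of the inverse via Proposition \ref{invertibility}, the factorization $M_{m,\omega,\theta}=T_\theta D_m T_\omega^*$, and the chained lower bounds for $T_\theta$, $D_m$ and $T_\omega^*$ supplied by the Riesz basis property (Proposition \ref{prop_rieszbasis}). The only differences are cosmetic: you carry the square roots $C\sqrt{A_\omega A_\theta}$ of the frame bounds (which is the correct constant under the paper's normalization, where the paper writes $A_\theta A_\omega C$), and you add the harmless extra remark that the lower bound persists for the closure.
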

\begin{proof}
By Proposition \ref{invertibility}, the inverse exists. Let  $A_\omega,B_\omega$ and $A_\theta,B_\theta$ be the  lower and upper bounds of $\omega$ and
$\theta$, respectively. The operator $M_{m,\omega,\theta}$ has closure extension $\widehat M_{m,\omega,\theta}:=T_\theta D_m T_\omega^*$. By Proposition \ref{prop_rieszbasis}, the operators $T^*_\omega,T_\omega,T_\theta,T^*_\theta$ are bounded, invertible with bounded inverses, so we have:
	$$
	{A_\theta}{A_\omega}C\|f\|\leq {A_\theta}C\|T^*_\omega f\|_2\leq  \|T_\theta D_m T^*_\omega f\|,\quad\forall f\in D(M_{m,\omega,\theta}),
	$$
	and the proof is completed. 
\end{proof}

%
%
%

\section{Conclusions}
Some questions about symbolic calculus in a more general set-up (not only in the case of dual Riesz bases) are open. For instance, it is known that in $\LDD$ a partial multiplication is defined (see \cite{ait_book,trts}), thus a symbolic calculus may be developed for multipliers in $\LDD$. The idea behind Definition \ref{def_typeAB} are connected to localization frames which were introduced in \cite{gro} and further studied (sometimes with variations)  in \cite{bchl,bchl2,batamitk,forgro,forrau,gro2}. More precisely, some result in Section \ref{sect_4} can be extended considering a certain decay of $x\mapsto \ip{f}{\omega_x}$ instead of assume that $\ip{f}{\omega_x}$ is null outside a bounded set.

\section*{Acknowledgments}

This work has been supported by the ``Gruppo Nazionale per l'Analisi Matematica, la Probabilit\`{a} e le loro Applicazioni'' (GNAMPA-INdAM).

\bibliographystyle{amsplain}

\end{document}